\numberwithin{equation}{section}
\definecolor{grey}{rgb}{.7,.7,.7}
\definecolor{refkey}{gray}{.45}
\definecolor{labelkey}{gray}{.45}
\newcommand{\xupref}[2]{\hspace{-0.3ex}\stackrel{\eqref{#1}}{#2}} 
\newtheorem{theorem}{Theorem}[section]
\newtheorem{proposition}[theorem]{Proposition}
\newtheorem{lemma}[theorem]{Lemma}
\newtheorem{corollary}[theorem]{Corollary}
\theoremstyle{remark}
\newtheorem{remark}[theorem]{Remark}
\theoremstyle{definition}
\newtheorem{definition}[theorem]{Definition}
\newtheorem{example}[theorem]{Example}
\newcommand{\N}{\mathbb N}
\newcommand{\R}{\mathbb R}
\newcommand{\dd}{\,\mathrm{d}}
\newcommand{\defeq}{\coloneqq}
\newcommand{\ba}{\begin{array}}
\newcommand{\ea}{\end{array}}
\newcommand{\tld}[1]{\widetilde{#1}}
\newcommand{\bthm}{\begin{theorem}}
\newcommand{\ethm}{\end{theorem}}
\newcommand{\bprop}{\begin{proposition}}
\newcommand{\eprop}{\end{proposition}}
\newcommand{\blemma}{\begin{lemma}}
\newcommand{\elemma}{\end{lemma}}
\newcommand{\bexmpl}{\begin{example}}
\newcommand{\eexmpl}{\end{example}}
\newcommand{\beqn}{\begin{equation}}
\newcommand{\eeqn}{\end{equation}}
\newcommand{\beqns}{\begin{equation*}}
\newcommand{\eeqns}{\end{equation*}}
\newcommand{\supp}{\operatorname{supp}}
\newcommand{\ol}{\overline}
\newcommand{\Prb}{\mathbb{P}}
\renewcommand{\leq}{\leqslant}
\renewcommand{\geq}{\geqslant}
\definecolor{mygreen}{rgb}{0.1,0.75,0.2}
\newcommand{\E}{\mathbb{E}}
\newcounter{myenumi}
\DeclareMathOperator*{\esssup}{ess\,sup}
\title[Stochastic Keller--Segel type Equation]{On global existence and blowup of solutions of Stochastic Keller--Segel type Equation}
\author{Oleksandr Misiats}
\address[Oleksandr Misiats]{Department of Mathematics and Applied Mathematics, Virginia Commonwealth University, Richmond, VA, USA}
\email{omisiats@vcu.edu}
\author{Oleksandr Stanzhytskyi}
\address[Oleksandr Stanzhytskyi]{Department of Mathematics, Taras Shevchenko National University of Kiev, Ukraine}
\email{ostanzh@gmail.com}
\author{Ihsan Topaloglu}
\address[Ihsan Topaloglu]{Department of Mathematics and Applied Mathematics, Virginia Commonwealth University, Richmond, VA, USA}
\email{iatopaloglu@vcu.edu}
\date{\today}                                        
\subjclass[2020]{35B44, 35K55, 60H30, 65M75}
\keywords{Keller--Segel equation, Stochastic partial differential equation, blowup, local and global solutions}                                           
\thanks{This is a post-peer-review, pre-copyedit version of an article published in NoDEA. The final
authenticated version is available online at: \url{http://dx.doi.org/10.1007/s00030-021-00735-2}.}                                    
\begin{document}

\begin{abstract}
In this paper we consider a stochastic Keller--Segel type equation, perturbed with random noise.  We establish that for special types of random pertubations (i.e. in a divergence form), the equation has a global weak solution for small initial data. Furthermore, if the noise is not in a divergence form, we show that the solution has a finite time blowup (with nonzero probability) for any nonzero initial data. The results on the continuous dependence of solutions on the small random perturbations, alongside with the existence of local strong solutions, are also derived in this work.  
\end{abstract}

\maketitle


\section{Introduction}\label{sec:intro}

In this paper we consider the stochastic aggregation-diffusion equation of Keller--Segel type in $\R^2$ for $t \geq 0$, given by
	\begin{equation}\label{eqn:main_eqn}
				\left\{ \begin{aligned} 
													& \dd \rho  \,=\, \left(\frac{a^2}{2}\Delta\rho - \chi \nabla \cdot(\rho \nabla c)\right) \dd t + \Phi (\rho, \nabla \rho) \dd W(t,x),  \\
													& -\Delta c = \rho, \\
													& \rho(0)=\rho_0,
				\end{aligned}\right.
	\end{equation}
Here $\Phi(\cdot, \cdot)$ is Lipschitz in both arguments,
\begin{equation}\label{eqn:noise}
W(x,s) \defeq \sum_{k=1}^N \alpha_k e_k(x) W_k(s),
\end{equation}
with $\|e_k\|_\infty \leq 1$ and $W_k$ denoting independent scalar Wiener processes. In case $N=\infty$ we additionally assume that $\sum_{k=1}^\infty \alpha_k^2 < \infty$ and $\{e_k \colon k\geq 1\}$ is an orthonormal basis in $L^2(\R^2)$. The initial condition $\rho_0$ is a non-negative integrable function. The solution of the second equation can be expressed as
	\[
		c = (-\Delta)^{-1}\rho = G*\rho(x),
	\]
where $G$ is the Bessel potential; hence we will refer to $\nabla \cdot (\rho \nabla G*\rho)$ as the {\it nonlocal term}, and \eqref{eqn:main_eqn} can be equivalently written in {\it nonlocal form}:
\begin{equation}\label{eqn:main_eqn1}
				\left\{ \begin{aligned} 
													& \dd \rho  \,=\, \left(\frac{a^2}{2}\Delta\rho - \chi \nabla \cdot(\rho \nabla G*\rho )\right) \dd t + \Phi (\rho, \nabla \rho) \dd W(t,x),  \\													& \rho(0)=\rho_0.
				\end{aligned}\right.
	\end{equation}

The deterministic version of the equation \eqref{eqn:main_eqn} is the well-known Keller--Segel model of chemotaxis, originally proposed in \cite{KellerSegel}. An important aspect of the Keller--Segel equation is that the diffusion term $\Delta \rho$ and the nonlocal quadratic aggregation term $\nabla \cdot (\rho \nabla G*\rho)$ are in direct competition: the diffusion term contributes to global existence (in the extreme case it is a heat equation with $\chi = 0$ and $\Phi=0$), while the quadratic term initiates finite time blowup in certain norms. Thus, depending on the mass of the initial value, solutions of the Keller--Segel equation may exist globally (for small initial mass) or blow up in finite time (for large initial mass) (see \cite{Bil,BlanchetCarrilloMasmoudi,BlanchetDolbeaultPerthame,HillenPainter,JagLuc} as well as \cite{Hor1,Hor2} for a compherensive survey). In this work we show that the noise term typically contributes to blowup of solutions, and in certain cases initiates blowup even for small masses.

The Keller--Segel equation can be derived from a system of interacting particles $\{(X_i)(t)\}_{i=1}^N$ satisfying stochastic differential equations
	\[
		\dd X_i(t) = \frac{1}{N-1} \sum_{i\neq j}^N \nabla K(X_i(t) - X_j(t))\dd t + \sqrt{2}\dd B_i(t), \qquad i=1,\ldots,N, \ t>0,
	\]
where $K$ is a radially increasing interaction kernel (such as $K(x)=-|x|^{2-d}$) and $\{B_i(t)\}_{i=1}^N$ are independent Wiener processes. The rigorous derivation of the Keller--Segel equation from the microscopic particle system follows, for example, through the propagation of chaos as $N\to \infty$ (see \cite{CarrilloChoiHauray,JabWan} and references therein).

The particle system which leads to the classical deterministic Keller--Segel only contains idiosyncratic noises that are independent from one particle to another. As the effects of these noises average out in the large particle limit $N\to\infty$ they lead to a deterministic equation. However, incorporating common environmental noises, such as temperature, light, and sound, is crucial for a more precise modeling of chemotaxis.

In \cite{CogFla} the authors obtain a propagation of chaos result for the particle system with common noise  given by
	\[
		\dd X_i(t) = \frac{1}{N} \sum_{i \neq j}^N \nabla K(X_i(t)-X_j(t))\dd t + \sum_{j=1}^\infty \sigma_j(X_i(t))\dd B_j(t), \qquad i=1,\ldots,N,
	\]
where $K$ is an interaction kernel such that $\nabla K$ is uniformly Lipschitz, $\sigma_j\colon \R^d \to \R^d$ are also uniformly Lipschitz and $B_j(t)$ are independent real-valued Brownian motions for $j\in\N$. They show that, the weak limit, in probability, of the empirical measure $\frac{1}{N}\sum_{i=1}^N \delta_{X_i(t)}$ is the measure $\mu(t)$ which satisfies the stochastic PDE
	\[
		\dd \mu(t) = \frac{1}{2} \Delta \mu(t) + \nabla \cdot \big( \mu(t) F(\mu(t)) \big)\dd t - \sum_{j=1}^\infty \nabla \big(\sigma_j(x)\mu(t)\big)\dd B_j(t)
	\]
in some distributional sense, where $F(\mu(t)) = \int_{\R^d} \nabla K(x-y)\dd \mu_y(t)$.

The works addressing stochastic Keller--Segel model include \cite{Fla} and \cite{HuaQiu}. In \cite{Fla}, the authors establish the local existence of solutions for a large class of nonlinear SPDEs, including stochastic Keller--Segel type equation on torus. The work \cite{HuaQiu} by Huang and Qiu addresses a Keller--Segel type stochastic aggregation-diffusion equation in the entire space, in which the interactions are determined by Bessel potentials. The authors obtain the SPDE as the large particle limit of a system of stochastic differential equations with common noise (in divergence form) and prove, using the contraction argument in spirit of \cite{CogFla}, the local existence and uniqueness for weak solutions on the interval $[0,T]$, where $T$ depends on the  $L^4$-norm of the initial value. Furthermore, the authors show that if the $L^4$-norm of the initial value is sufficiently small, the equation admits a global weak solution. 

On the contrary, if the initial data is large, in the deterministic case it is well known (see e.g. \cite{BilKar,BlanchetDolbeaultPerthame}) that global existence of solutions is impossible, since the solution has finite time blowup. The finite time blowup of the solutions are not considered in \cite{HuaQiu}. Hence, to the best of our knowledge, the question of a finite time blowup for stochastic Keller--Segel model, has not been addressed in the literature yet.  Only recently, in \cite{RosSta}, Rosenzweig and Staffilani study a large class of active scalar equations in arbitrary dimension (including parabolic-elliptic Patlak--Keller--Segel equation) and show that a random diffusion term can lead to global solutions for such equations which, without any diffusion, would have finite time blowup.

In this paper we deal with the stochastic Keller--Segel equation, with the stochastic perturbation of the following two types. The first choice of noise is of {\it divergence type}, which is a particular case of the noise in \cite{HuaQiu}:
 \begin{equation}\label{eqn:main_eqn_type1}
				\left\{ \begin{aligned} 
													& \dd \rho  \,=\, \left(\frac{\nu^2 + \sigma^2}{2}\Delta\rho - \chi \nabla \cdot(\rho \nabla G*\rho )\right) \dd t + \sigma  \nabla \rho \dd W(t),  x \in \R^2 \\													& \rho(0)=\rho_0.
				\end{aligned}\right.
	\end{equation}
	where $\sigma>0$ is a constant, $W(t) = (W_1(t), W_2(t))$, with $W_1$ and $W_2$ being independent scalar Wiener processes, and for the convenience of further presentation, we denote 
	$$\nu^2:= a^2 - \sigma^2 > 0.$$
The second choice is a noise of non-divergence type:
\begin{equation}\label{eqn:main_eqn_type2}
				\left\{ \begin{aligned} 
													& \dd \rho  \,=\, \left(\frac{a^2}{2}\Delta\rho - \chi \nabla \cdot(\rho \nabla G*\rho )\right) \dd t + \Phi(\rho) \dd W(t,x),  \\													& \rho(0)=\rho_0,
				\end{aligned}\right.
	\end{equation}
where $\Phi(\cdot)$ is Lipschitz and $W(x,s)$ is defined in (\ref{eqn:noise}). Sometimes we will refer to the noise in (\ref{eqn:main_eqn_type2}) as the general noise. The noise of this type is consistent with the one typically used in the abstract theory of SPDEs by Da Prato and Zabczyk \cite{DapZab92, DapZab96} when studying the long time behavior and invariant measures for stochastic PDEs, both in local \cite{MisStaYip, MisStaYip2, MisStaYip3} and nonlocal  \cite{MisStaHie} settings. 

\bigskip

Let us overview the main results of the present paper.  
\medskip
\begin{itemize} \setlength\itemsep{1em}
\item In Section \ref{sec:existence}, which is devoted to the existence of solutions, we start our analysis with Theorem \ref{thm:exist_loc_spec}. First we show that the equation  (\ref{eqn:main_eqn_type1}) has a local-in-time weak solution, which is in the space $L^p(\R^2), p \geq 2$ with probability 1. 

\medskip

\noindent We further show in Theorem \ref{th:global}, by means of a priori estimates, that the local solution is in fact global, provided the initial mass is sufficiently small. Similar results were obtained in \cite{HuaQiu}  for $p=4$ only.  

\medskip

\noindent We conclude Section \ref{sec:existence} with the analysis of Keller--Segel with general noise (\ref{eqn:main_eqn_type2}). Using the approach of Debussche, Glatt-Holtz, and Temam \cite{DebGHTem}, in Theorem \ref{thm:strong} we show that this equation has a local strong solution for any initial data. 

\item In Section \ref{sec:blowup} we analyze several scenarios, under which the solutions of (\ref{eqn:main_eqn1}) seize to exist globally in time, and define several types of blowup behavior (see Definition \ref{def:bu_types} below). We start with Theorem \ref{thm:bu_special}, in which  we show that for sufficiently large initial mass, the solutions of (\ref{eqn:main_eqn_type1}) have finite time blowup with probability 1.

\medskip

\noindent Theorem \ref{thm:bu_general} establishes a similar result for the solutions of equation (\ref{eqn:main_eqn_type2}). However, the analysis in this case is more difficult, since the $L^1$ norm of solutions is preserved only on average (in contrast with  (\ref{eqn:main_eqn_type1}), where it is preserved almost surely), and there are no a priori bounds on the length of the existence intervals for solutions. 

\medskip

\noindent We conclude Section \ref{sec:blowup} by showing that it is not possible to establish the analog of Theorem \ref{th:global} for the equation (\ref{eqn:main_eqn_type2}) with general noise. In particular, in Theorem \ref{thm:bu_general} we argue that for any nonzero initial data the solution has a finite time blowup, with a nonzero probability. 

\item Finally, in Section \ref{sec:small_perturb} we show, that small random perturbations of the equation (\ref{eqn:main_eqn_type1}) converge to the solutions of deterministic Keller--Segel equation. 
\end{itemize}

\bigskip
\section{Preliminaries}\label{sec:prelim}

We start by noting some properties of the Bessel potential which accounts for the nonlocal contribution. The space of Bessel potentials is defined as the space of function $f$ such that $(-\Delta)^{k/2}f \in L^p(\R^n)$, and it is equivalent to the Sobolev space $W^{k,p}(\R^n)$. Since $c = (-\Delta)^{-1}\rho$, if $\rho \in L^p(\R^n)$ with $1<p<n$, then $c\in W^{2,p}(\R^n)$.

Using the Sobolev embedding $W^{k,p}(\R^n) \subset C^{r,\alpha}(\R^n)$ where $n<pk$, $\alpha \in (0,1]$, $\frac{1}{p}-\frac{k}{n} = -\frac{r+\alpha}{n}$, with the choices of $n=2$, $k=2$, and $r=1$, we see that $\alpha>0$ is equivalent to assuming that $p>2$. Therefore $W^{2,p}(\R^2) \subset C^{1,\alpha}(\R^2)$ for all  $p>2$ and $\alpha>0$.

In particular, we have that
	\begin{equation} \label{eqn:bessel_est}
		\| G * \rho \|_{W^{1,\infty}(\R^2)} \leq C_p \| G*\rho \|_{W^{2,p}(\R^2)} \leq C_p \|\rho \|_p
	\end{equation}
for all $p>2$. Here and in the rest of the paper $\| \, \cdot \, \|_{p}$ denotes the $L^p(\R^n)$-norm.

Let $(\Omega, \mathcal{F},   \mathcal{F}_t, P)$ be a complete filtered probability space.  Denote by $\mathcal{F}^W_t, t \geq 0$ the augmented filtration, generated by the Wiener process in (\ref{eqn:main_eqn}). Following \cite{HuaQiu}, we introduce the spaces:

\begin{definition} 
For each Banach space $(X, \|\cdot\|_X)$, for all $q \geq 1$ and $0 \leq t < \tau \leq T$, denote $S_{\mathcal{F}}^{q}([t, \tau];X)$ to be the set of $X$-valued, $\mathcal{F}_t$ - adapted and continuous processes  $\{\xi_s, s \in [t,\tau]\}$, such that
\[
\|\xi\|_{S_{\mathcal{F}}^q([t,\tau]; X)}:=
\begin{cases}
\left(\E \sup_{s \in [t,\tau]} \|\xi_s\|_{X}^q\right)^{1/q}, \quad q \in [1,\infty);\\
\esssup_{\omega \in \Omega} \sup_{s \in [t, \tau]} \|\xi_s\|_{X}, \quad q = \infty.
\end{cases}
\]
\end{definition}

\begin{definition} 
Denote $L_{\mathcal{F}}^{q}([t, \tau];X)$ to be the set of $X$-valued, predictable processes  $\{\xi_s, s \in [t,\tau]\}$, such that
\[
\|\xi\|_{L_{\mathcal{F}}^q([t,\tau]; X)}:=
\begin{cases}
\left(\E \int_t^\tau \|\xi_s\|_{X}^q \dd s \right)^{1/q}, \quad q \in [1,\infty);\\
\esssup_{\omega \in \Omega} \sup_{s \in [t, \tau]} \|\xi_s\|_{X}, \quad q = \infty.
\end{cases}
\]
\end{definition}

\begin{definition}\label{def:weak}
A family of random functions $\rho(t)$ in $S_{\mathcal{F_W}}^{\infty}([0, T];L^1(\R^2) \cap L^p(\R^2))$ is a {\it weak solution} of (\ref{eqn:main_eqn1}) on $[0,T]$ if for any $\varphi \in C_c^2(\R^2)$ and $t \in [0,T]$ almost surely we have
\begin{align*} 
		\int_{\R^2} & \varphi(x)  \rho(x,t) \dd x - \int_{\R^2} \varphi(x) \rho(x,0)\dd x  =  \\
		& \int_0^t \left( \frac{a^2}{2}\int_{\R^2} \Delta \varphi(x)\rho(x,s) \dd x  - \chi  \int_{\R^2} \rho(s) \nabla  \varphi(x) \nabla_x (\mathcal{G} * \rho(s)) \dd x \right) \dd s \\
&  + \int_0^t (\varphi(\cdot), \Phi \dd W(s,\cdot)),
	\end{align*}
\end{definition}
where
\[
(\varphi(\cdot), \Phi \dd W(s,\cdot)) := \sum_{k=1}^{\infty} a_k \int_{\R^2} \varphi(x)  \Phi(\rho, \nabla \rho) e_k(x) \dd x \dd W_k(s).
\]
The rigorous definition of a {\it strong solution} will we given in Subsection \ref{subsec:loc_exist_gen_noise}.

The following proposition is crucial in establishing the physical properties of the solution of Keller--Segel equation, namely, the preservation of mass and non-negativity. 
\begin{proposition}\label{prop:max_pr}
Suppose $\rho_0 \in L^1(\R^2)$ is such that $\rho_0 \geq 0$ and 
\[
\int_{\R^2} \rho_0(x) \dd x = m_0.
\]
Then a local solution satisfies the following properties.
\begin{enumerate}[label={\bf (\roman{enumi})}] \itemsep=0.8em
\item (Positivity property) If $\rho(x,t)$ is a local solution of either  (\ref{eqn:main_eqn_type1}) or (\ref{eqn:main_eqn_type2}), then for all $t$ from the interval of existence we have $\rho(x,t) \geq 0$ a.s.

\item (Preservation of mass property)
If $\rho(x,t)$  is a local solution of (\ref{eqn:main_eqn_type1}) , then for all $t$ from its interval of existence we have 
$$\int_{\R^2} \rho(x,t) \dd x = m_0 \text{    a.s. }$$  
\item (Preservation of average mass property)  If $\rho(x,t)$ is a local solution of (\ref{eqn:main_eqn_type2}), then for all $t$ from its interval of existence 
$$\E \int_{\R^2} \rho(x,t) \dd x = m_0.$$
\end{enumerate}
\end{proposition}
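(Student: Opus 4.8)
The plan is to test the (nonlocal form of the) equation against a suitable sequence of cutoff functions and pass to the limit, using the Lipschitz structure of the noise together with the structural properties of the two equations. For the positivity statement (i), I would argue by a stochastic comparison / Itô–Tanaka argument: write the equation for $\rho^- := \max\{-\rho, 0\}$, apply Itô's formula to the convex regularization of $x \mapsto (x^-)^2$ (or work at the level of a Galerkin/mollified approximation where Itô's formula is legitimate), and show that the drift contributions from $\frac{a^2}{2}\Delta\rho$ and the nonlocal term $-\chi\nabla\cdot(\rho\nabla G*\rho)$ are nonpositive after integration (using $\int \Delta\rho\,\rho^- \le 0$ and, for the nonlocal term, $\Delta(G*\rho) = -\rho$ so that an integration by parts produces a favorable $\int (\rho^-)^2\rho$-type term on the set $\{\rho<0\}$), while the martingale part has zero expectation and the noise coefficient $\Phi(\rho,\nabla\rho)$ vanishes or is controlled where $\rho\le 0$ — here I would use that $\Phi$ is Lipschitz and, for the divergence-type noise $\sigma\nabla\rho\cdot\dd W$, that the stochastic term also integrates by parts cleanly against $\rho^-$. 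Since $\rho_0 \ge 0$ gives $\rho_0^- \equiv 0$, Gronwall then forces $\E\|\rho^-(t)\|_2^2 = 0$ for all $t$ in the existence interval, hence $\rho(t)\ge 0$ a.s.

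For (ii), the idea is to take $\varphi = \varphi_R$ in the weak formulation of \eqref{eqn:main_eqn_type1}, where $\varphi_R(x) = \psi(x/R)$ with $\psi \in C_c^2(\R^2)$, $\psi \equiv 1$ on $B_1$, $0\le\psi\le 1$, so that $\|\Delta\varphi_R\|_\infty = O(R^{-2})$ and $\|\nabla\varphi_R\|_\infty = O(R^{-1})$. Since $\rho \in S^\infty_{\mathcal F_W}([0,T];L^1\cap L^p)$, the diffusion term is bounded by $CR^{-2}\int_0^t\|\rho(s)\|_1\,\dd s \to 0$, and the nonlocal term is bounded by $\chi R^{-1}\int_0^t\|\rho(s)\|_1\|\nabla G*\rho(s)\|_\infty\,\dd s$, which tends to $0$ by the Bessel estimate \eqref{eqn:bessel_est} (this is where I use $p>2$). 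For the stochastic term $\int_0^t(\varphi_R,\sigma\nabla\rho\,\dd W(s))$, an integration by parts moves the gradient onto $\varphi_R$, giving $-\sigma\int_0^t\int \rho\,\nabla\varphi_R\cdot\dd W(s)$, whose quadratic variation is $O(R^{-2})$ and which therefore converges to $0$ in $L^2(\Omega)$; passing to a subsequence gives a.s. convergence. In the limit $R\to\infty$, monotone/dominated convergence (using $\rho \ge 0$ from part (i) and $\rho \in L^1$) yields $\int_{\R^2}\rho(x,t)\,\dd x = \int_{\R^2}\rho_0(x)\,\dd x = m_0$ almost surely.

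For (iii), I repeat the same cutoff computation for \eqref{eqn:main_eqn_type2}, but now the stochastic term is $\sum_k\alpha_k\int_0^t\int\varphi_R(x)\Phi(\rho)e_k(x)\,\dd x\,\dd W_k(s)$, which is a genuine martingale (not reducible to a total-derivative form), so I cannot kill it pointwise in $\omega$ — instead I take expectations, using that the Itô integral has zero mean, to get $\E\int\varphi_R\rho(t) = \E\int\varphi_R\rho_0 + \E\int_0^t(\text{drift terms})$, and then send $R\to\infty$ exactly as before (the drift estimates are identical, using $a^2$ in place of $\nu^2+\sigma^2$), obtaining $\E\int_{\R^2}\rho(x,t)\,\dd x = m_0$. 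The main obstacle, and the step requiring the most care, is justifying the integration by parts in the stochastic term of (ii) and, more generally, making the formal Itô-level manipulations in (i) rigorous: since the weak solution has only $L^1\cap L^p$ spatial regularity (no control on $\nabla\rho$ in the weak setting), one should really carry out these computations on an approximating sequence — e.g. the Galerkin/regularized solutions from the construction in Theorem \ref{thm:exist_loc_spec} — where Itô's formula and the integration by parts are valid, and then pass to the limit using the a priori bounds; alternatively one mollifies the weak formulation in space and sends the mollification parameter to zero. I would flag that the divergence structure of the noise in \eqref{eqn:main_eqn_type1} is exactly what makes the almost-sure conservation in (ii) possible, whereas its absence in \eqref{eqn:main_eqn_type2} is precisely why only the averaged statement (iii) survives.
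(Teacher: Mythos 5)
Your proposal reaches the right conclusions but takes a genuinely different route from the paper on all three parts. For (i), the paper runs no energy argument at all: it invokes Krylov's maximum principle (\cite[Theorem 5.12]{KryMaxPr}) for the \emph{linearized} equation \eqref{eqn:main_linear} with an arbitrary frozen $\xi\in S_T$, and then simply observes that the conclusion applies at the fixed point constructed in Theorem \ref{thm:exist_loc_spec}. This sidesteps the regularity issues you rightly flag for an It\^o--Tanaka computation on $\rho^-$. Your sign bookkeeping for the drift is in fact correct (the nonlocal term produces $\chi\int(\rho^-)^2\rho\leq 0$ on $\{\rho<0\}$, and the It\^o correction $\sigma^2\int|\nabla\rho^-|^2$ from the divergence noise is absorbed because $\nu^2=a^2-\sigma^2>0$), but for the general noise \eqref{eqn:main_eqn_type2} your argument additionally needs $\Phi(0)=0$, so that the It\^o correction $\tfrac12\int \mathbf{1}_{\{\rho<0\}}|\Phi(\rho)|^2\sum_k\alpha_k^2e_k^2\,\dd x$ is bounded by a constant times $\|\rho^-\|_2^2$ and can be handled by Gr\"onwall; ``$\Phi$ Lipschitz'' alone does not give this, and the proposition does not state that hypothesis. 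That is a real (if repairable, since the paper later works under $|\Phi(\rho)|\leq L|\rho|$) gap in your route for (i) that the paper's maximum-principle route avoids.

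For (ii) and (iii) the paper argues through the mild formulation \eqref{eqn:mild1}--\eqref{eqn:mild2} and the semigroup identities \eqref{prop1}--\eqref{prop2}, so that the diffusion, the nonlocal divergence term, and (in case (ii)) the stochastic convolution each integrate to zero over $\R^2$ term by term, with an expectation taken only in case (iii). Your cutoff-test-function argument in the weak formulation reaches the same conclusions and is, if anything, more careful about the interchange of $\int_{\R^2}$ with the time and stochastic integrals, which the paper performs formally (implicitly via stochastic Fubini); the price is the $L^2(\Omega)$-plus-subsequence step for the martingale term and the reliance on \eqref{eqn:bessel_est}, hence on $p>2$, for the nonlocal term. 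Both approaches isolate the same structural point you correctly emphasize: the divergence form of the noise in \eqref{eqn:main_eqn_type1} is exactly what upgrades conservation of mass in mean to conservation almost surely.
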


We postpone the proof of this proposition to subsection \ref{subsec:local_exist}. The proof of the next proposition relies on the generalized Ito's formula \cite{KryIto}. 

\begin{proposition}\label{prop:Ito}
Let $\rho(t,x)$ be a weak solution of (\ref{eqn:main_eqn_type1}). Then
\begin{multline}\label{eqn:Ito_formula}
		\| \rho \|_p^p - \| \rho_0 \|_p^p \\ = - \int_0^t \int_{\R^2} \frac{1}{2} \nu^2 p (p-1) \rho^{p-2} |\nabla \rho|^2 \dd x \dd s 
			- \chi \int_0^t \int_{\R^2} p \rho^{p-1} \nabla \cdot (\rho \nabla c)  \dd x \dd s.
	\end{multline}
\end{proposition}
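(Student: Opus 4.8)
The plan is to apply the generalized It\^o formula of Krylov \cite{KryIto} to the functional $\rho\mapsto\|\rho\|_p^p=\int_{\R^2}\rho^p\,\dd x$ evaluated along a weak solution of \eqref{eqn:main_eqn_type1}. First note that by the positivity property in Proposition \ref{prop:max_pr} we may work with $\rho\geq 0$, so $\rho^p$ is unambiguously defined, and since $p\geq 2$ the map $u\mapsto u^p$ is of class $C^2$ on $[0,\infty)$ with derivatives $pu^{p-1}$ and $p(p-1)u^{p-2}$. Because a weak solution is a priori only in $L^1(\R^2)\cap L^p(\R^2)$, It\^o's formula cannot be applied verbatim: if necessary one mollifies $\rho$ in space to $\rho^\e=\rho*\eta_\e$ (which solves the mollified equation with the same structure), applies It\^o to $\int_{\R^2}(\rho^\e)^p\,\dd x$, and passes to the limit $\e\to 0$ using the a priori bounds established for the weak solution (the $S^\infty_{\mathcal F}([0,T];L^1\cap L^p)$ bound together with the parabolic gain of one spatial derivative, which is what makes $\rho^{p-2}|\nabla\rho|^2$ integrable). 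This regularization is the main technical obstacle; the rest is a short computation.

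Granting the expansion, $\dd\|\rho\|_p^p$ receives three contributions. The drift coming from the diffusion term, after integration by parts (the boundary terms at infinity vanishing since $\rho^p$ and $\nabla(\rho^p)$ are integrable), equals
\[
\int_{\R^2}p\rho^{p-1}\,\frac{\nu^2+\sigma^2}{2}\Delta\rho\,\dd x
=-\frac{\nu^2+\sigma^2}{2}\,p(p-1)\int_{\R^2}\rho^{p-2}|\nabla\rho|^2\,\dd x.
\]
The It\^o (quadratic variation) correction, using that the noise in \eqref{eqn:main_eqn_type1} is the transport noise $\sigma\nabla\rho\,\dd W(t)=\sigma\sum_{i=1}^2\partial_{x_i}\rho\,\dd W_i(t)$ with $W_1,W_2$ independent, equals
\[
\frac12\sum_{i=1}^2\int_{\R^2}p(p-1)\rho^{p-2}\big(\sigma\,\partial_{x_i}\rho\big)^2\,\dd x
=\frac{\sigma^2}{2}\,p(p-1)\int_{\R^2}\rho^{p-2}|\nabla\rho|^2\,\dd x.
\]
Finally, the stochastic integral term vanishes: since $p\rho^{p-1}\nabla\rho=\nabla(\rho^p)$,
\[
\int_{\R^2}p\rho^{p-1}\,\sigma\nabla\rho\cdot\dd W(t)\,\dd x=\sigma\Big(\int_{\R^2}\nabla(\rho^p)\,\dd x\Big)\cdot\dd W(t)=0,
\]
the spatial integral of a total gradient over $\R^2$ being zero.

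Combining the first two displays, the $\sigma^2$-terms cancel (this is precisely the reason for writing the diffusion coefficient as $a^2=\nu^2+\sigma^2$), leaving $-\frac12\nu^2 p(p-1)\int_{\R^2}\rho^{p-2}|\nabla\rho|^2\,\dd x$; the remaining part of the drift is the nonlocal term $-\chi\int_{\R^2}p\rho^{p-1}\nabla\cdot(\rho\nabla c)\,\dd x$; and the third display shows there is no martingale part, so the paths of $t\mapsto\|\rho\|_p^p$ have bounded variation. Integrating in time over $[0,t]$ then gives exactly \eqref{eqn:Ito_formula}. As indicated, the only genuinely delicate point is the rigorous justification of the It\^o expansion for a merely weak solution, which is handled by the mollification-and-limit procedure above together with the parabolic a priori estimates; the algebraic identity itself is immediate once the cancellation is observed.
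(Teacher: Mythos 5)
Your proposal is correct and follows essentially the same route as the paper: apply Krylov's generalized It\^o formula to $\|\rho\|_p^p$, observe that the quadratic-variation correction $\frac{\sigma^2}{2}p(p-1)\int\rho^{p-2}|\nabla\rho|^2$ exactly cancels the $\sigma^2$-part of the diffusion drift, and that the stochastic integral vanishes because $p\rho^{p-1}\nabla\rho=\nabla(\rho^p)$ integrates to zero over $\R^2$. The paper simply cites \cite{KryIto} (Theorem 2.1) for the rigorous justification rather than spelling out a mollification argument, but the substance is identical.
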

\begin{proof}
Note that, formally, for $p>2$ and $\mathcal{F}(\rho)=\rho^p$, we have
	\[
		\dd\mathcal{F} = p \rho^{p-1} \dd \rho + \frac{1}{2} p (p-1) \rho^{p-2} \sigma^2|\nabla \rho|^2 \dd t. 
	\]
Hence, using \cite{KryIto}, Theorem 2.1, we have
	\begin{multline}\nonumber
		\| \rho \|_p^p - \| \rho_0 \|_p^p = -\int_0^t \int_{\R^2} \left( \frac{1}{2}(\nu^2 + \sigma^2) p (p-1) \rho^{p-2} | \nabla \rho|^2 + \chi p \rho^{p-1} \nabla \cdot (\rho \nabla c) \right)\dd x \dd s \\ 
			+ \int_0^t \int_{\R^2} \frac{1}{2} p (p-1) \rho^{p-2} \sigma^2 |\nabla \rho|^2 \dd x \dd s - \int_0^t \int_{\R^2} \sigma p \rho^{p-1} \nabla \rho  \dd W(t).
	\end{multline}
Since $$\int_{R^2} p \rho^{p-1}\nabla \rho \,\dd x = \int_{\R^2} \nabla (\rho^p)\dd x=0,$$ 
the formula (\ref{eqn:Ito_formula}) follows.  
\end{proof}

\bigskip
\section{Existence of solutions.}\label{sec:existence}
In this section we show that both equations (\ref{eqn:main_eqn_type1}) and (\ref{eqn:main_eqn_type2}) have local solutions. In addition, we show that for small initial data, the equation  (\ref{eqn:main_eqn_type1})  has a global in time solution. For the rest of the paper we denote by $H^1$ the Sobolev space $W^{1,2}$.
\subsection{Local existence of weak solutions for equation (\ref{eqn:main_eqn_type1}).}\label{subsec:local_exist}
In this section we show that the stochastic Keller--Segel equation with the noise in divergence form (\ref{eqn:main_eqn_type1}) is locally well posed.
\begin{theorem}\label{thm:exist_loc_spec}
For any $\rho_0 \in L^1(\R^2) \cap L^p(\R^2) \cap H^{1}(\R^2)$, $p \geq 2$,  with $\int_{\R^2} \rho_0(x) \dd x = m_0$, there exists $T = T(m_0)>0$ such that (\ref{eqn:main_eqn_type1}) has a weak solution $\rho(x,t) \in L^2_{\mathcal{F_W}}(0,T,H^1(\R^2)) \cap S_{\mathcal{F_W}}^{\infty}([0, T];L^1(\R^2) \cap L^p(\R^2)).$ 
\end{theorem}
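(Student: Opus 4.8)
The plan is to construct the local weak solution via a vanishing-mollification/truncation scheme combined with a fixed-point argument, exactly in the spirit of \cite{HuaQiu} and \cite{CogFla}, but tracking the $L^p\cap H^1$ regularity rather than just $L^4$. First I would rewrite \eqref{eqn:main_eqn_type1} in its mild (or equivalently, weak-against-test-function) form, treating the nonlocal drift $\chi\,\nabla\cdot(\rho\,\nabla G*\rho)$ as a perturbation of the linear stochastic heat flow generated by $\frac{\nu^2+\sigma^2}{2}\Delta\,\dd t + \sigma\nabla\,\dd W(t)$. The key structural point is that, because the noise $\sigma\nabla\rho\,\dd W$ is of gradient (divergence) type with constant coefficient $\sigma$, the stochastic term contributes an extra Itô correction $\frac{\sigma^2}{2}\Delta\rho\,\dd t$ — this is precisely why the deterministic diffusion constant appears as $\frac{\nu^2+\sigma^2}{2} = \frac{a^2}{2}$, and the effective dissipation seen at the level of the energy estimate is $\nu^2$, as recorded in Proposition \ref{prop:Ito}.

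The main steps, in order, would be: (1) Fix $R>0$ and regularize: replace $\rho$ in the nonlocal term by a truncation $\theta_R(\|\rho\|_p)\rho$ and mollify, so that the drift becomes globally Lipschitz from $H^1$ (or $L^p$) to itself; the Bessel-potential estimate \eqref{eqn:bessel_est}, $\|G*\rho\|_{W^{1,\infty}} \le C_p\|\rho\|_p$ for $p>2$, is what makes $\nabla\cdot(\rho\nabla G*\rho)$ controllable in terms of $\|\rho\|_p$ and $\|\rho\|_{H^1}$. (2) For the truncated equation, run a Banach fixed-point argument in the space $S_{\mathcal{F}_W}^\infty([0,T];L^1\cap L^p) \cap L^2_{\mathcal{F}_W}(0,T;H^1)$: the linear stochastic convolution is bounded on this space by maximal-regularity / Burkholder--Davis--Gundy estimates for the stochastic heat equation, and the nonlinear map is a contraction for $T=T(R)$ small. (3) Derive the a priori bounds: apply Itô's formula (Proposition \ref{prop:Ito}) to get $\frac{d}{dt}\|\rho\|_p^p \le -\,c\,\nu^2 \int \rho^{p-2}|\nabla\rho|^2 + (\text{nonlocal term})$, and estimate the nonlocal term by integration by parts plus \eqref{eqn:bessel_est} and interpolation (Gagliardo--Nirenberg) to bound $\|\rho\|_p$ and $\int_0^T\|\nabla\rho\|_2^2$ on an interval $[0,T(m_0)]$ depending only on the mass $m_0 = \|\rho_0\|_1$ (using also Proposition \ref{prop:max_pr}(ii) to keep $\|\rho\|_1 = m_0$), so the truncation is not activated on $[0,T(m_0)]$ for $R$ large enough depending on $m_0$. (4) Pass to the limit in the mollification parameter using the uniform bounds and a compactness/tightness argument (or, since uniqueness holds for the truncated problems, simply identify the mollified solutions with the truncated one on $[0,T(m_0)]$), and verify that the limit satisfies Definition \ref{def:weak} and lies in the asserted spaces; the positivity and mass conservation then follow from Proposition \ref{prop:max_pr}.

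The main obstacle I anticipate is step (3): closing the a priori estimate for $\|\rho\|_p$ uniformly in the regularization, with $T$ depending only on $m_0$ and not on higher norms of $\rho_0$. The nonlocal term $\chi\int p\rho^{p-1}\nabla\cdot(\rho\nabla c)\,\dd x$, after integrating by parts, produces a term of the form $\sim\int \rho^{p+1}\,\dd x$ (using $-\Delta c = \rho$), which is supercritical relative to the dissipation $\nu^2\int|\nabla(\rho^{p/2})|^2$ in two dimensions; one must use the Gagliardo--Nirenberg--Sobolev inequality $\|f\|_{2}^2 \lesssim \|f\|_1\|\nabla f\|_2$ (with $f = \rho^{p/2}$) together with the conserved mass $m_0$ to absorb this term into the dissipation, which only works on a short time interval and requires care since the constant degrades as $p$ grows — this is the reason $T = T(m_0)$ and not a global time, and it is the technical heart of the theorem. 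A secondary subtlety is justifying the formal Itô computation in Proposition \ref{prop:Ito} (the appeal to \cite{KryIto}) at the level of the regularized equations, where $\rho$ genuinely has the $H^1$ regularity needed.
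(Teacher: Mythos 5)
Your overall scheme (regularize, solve, a priori estimate, pass to the limit) is a legitimate alternative to what the paper does, but it differs from the paper's route and, more importantly, step (3) as you describe it contains a genuine gap. The paper does not truncate or mollify: it freezes the argument of the Bessel potential, i.e.\ for $\xi$ in the ball $B=\{\esssup_\omega\sup_t\|\xi\|_p\le 2\|\rho_0\|_p\}$ it solves the \emph{linear} SPDE with drift $\tfrac{1}{2}(\nu^2+\sigma^2)\Delta\rho-\chi\nabla\cdot(\rho\,\nabla G*\xi)$ and gradient noise $\sigma\nabla\rho\,\dd W$ by invoking Krylov's $L^p$-theory for SPDEs (Theorems 5.1 and 7.1 of \cite{KryMaxPr}), and then shows that $\xi\mapsto\rho_\xi$ maps $B$ into $B$ and is a contraction for small $T$ via the It\^o formula \eqref{eqn:Ito_formula_lin}, the estimate \eqref{eqn:est_nonloc}, and Gr\"onwall. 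This matters for your step (2) as well: the stochastic term $\sigma\nabla\rho\,\dd W$ loses a derivative, so a mild-formulation contraction with the heat semigroup plus ``BDG/maximal regularity for the stochastic convolution'' does not close in $S^\infty_{\mathcal F_W}([0,T];L^p)$ without using the coercivity $\nu^2=a^2-\sigma^2>0$ at the level of an energy estimate (which is exactly what Krylov's theory encodes). You correctly identify the It\^o correction and the effective dissipation $\nu^2$, but your fixed-point step needs to be rebuilt around that structure rather than around the stochastic heat semigroup.

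The genuine gap is your insistence on closing the a priori bound on an interval $[0,T(m_0)]$ depending only on the mass. After integration by parts and \eqref{eqn:bessel_est}, the differential inequality for $y(t)=\|\rho(t)\|_p^p$ has the form $y'\le -c\nu^2\|\nabla\rho^{p/2}\|_2^2 + C\|\rho\|_p^2\,y$, and without the smallness condition \eqref{eqn:smallness_cond1} the Gagliardo--Nirenberg/mass-conservation step only converts this into a superlinear (Riccati-type) inequality $y'\le C(m_0)\,y^{1+\alpha}$ whose guaranteed existence time shrinks as $y(0)=\|\rho_0\|_p^p$ grows. This is not a removable technicality: a concentrated initial bump of fixed mass $m_0$ but large $\|\rho_0\|_p$ can blow up arbitrarily fast (cf.\ the second-moment argument of Theorem \ref{thm:bu_special}, where the blowup time is controlled by $M_0$, which can be made small at fixed $m_0$), so no bound with $T$ depending only on $m_0$ can hold. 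The paper's own contraction in fact chooses $T$ so that $\exp(4\|\rho_0\|_p^2T)\le 2^p$, i.e.\ the existence time it delivers depends on $\|\rho_0\|_p$ (the $T=T(m_0)$ in the statement should be read accordingly). If you replace your step (3) by the requirement that the truncation at level $R=2\|\rho_0\|_p$ is not activated for $T=T(\|\rho_0\|_p)$ small, and replace the mild-form contraction by an energy/Krylov-type argument exploiting $\nu^2>0$, your construction goes through and is then essentially equivalent to the paper's.
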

\begin{proof} 
For any $p \geq 2$ and $T>0$ introduce the space
	\[
		S_T \defeq \Bigl\{ \rho=\rho(x,t,\omega) \in S_{\mathcal{F_W}}^{\infty}([0, T];L^1(\R^2) \cap L^p(\R^2)) \colon \esssup_{\omega \in \Omega} \sup_{t\in [0,T]} \| \rho \|_p < \infty. \Bigr\}
	\]
For $\xi \in S_T$, we define the linearized equation
	\begin{equation}\label{eqn:main_linear}
				\left\{ \begin{aligned} 
													& \dd \rho  \,=\, \left(\frac{1}{2}(\nu^2 + \sigma^2)\Delta\rho - \chi \nabla \cdot(\rho \nabla G * \xi)\right) \dd t + \sigma \nabla \rho \dd W(t),  \\
													& \rho(0)=\rho_0.
				\end{aligned}\right.
	\end{equation}
It follows from \cite{KryMaxPr} (Theorem 5.1 and Theorem 7.1) that this equation has a unique solution $\rho_\xi \in L^2_{\mathcal{F_W}}(0,T,H^1(\R^2)) \cap S_{\mathcal{F_W}}^{\infty}([0, T];L^1(\R^2) \cap L^p(\R^2))$. 
Now, we define the metric space
	\[
		B \defeq \Bigl\{ \xi \in S_T \colon \esssup_{\omega\in\Omega} \sup_{t\in[0,T]} \| \xi \|_p \leq 2\|\rho_0\|_p \Bigr\},
	\]
with the usual metric given by
	\[
		d(u,v) \defeq \esssup_{\omega\in\Omega} \sup_{t\in[0,T]} \| u-v\|_p.
	\]
In order to obtain local existence, our goal is to show that 
	\begin{equation}\label{eqn:map}
	B \ni \xi \mapsto \rho_\xi \in B,
	\end{equation}
	as well as the contraction property
	\begin{equation}\label{eqn:contraction}
	d(\rho_\xi, \rho_{\eta}) \leq z  d(\xi,{\eta}), \ \ \text{ for } \xi,{\eta} \in B \text{ and } 0<z<1.
	\end{equation}
To show (\ref{eqn:map}), let	$\rho_0 \in L^1(\R^2) \cap L^p(\R^2)$, $\rho_0 \geq 0$.  In a similar way to Proposition \ref{prop:Ito}, we may apply generalized Ito's formula for the solution $\rho(x,t)$ of the linearized equation (\ref{eqn:main_linear}). This way, for any $\xi \in B$, we have 
\begin{multline}\label{eqn:Ito_formula_lin}
		\| \rho(t) \|_p^p - \| \rho_0 \|_p^p \\ = - \int_0^t \int_{\R^2} \frac{1}{2} \nu^2 p (p-1) \rho^{p-2} |\nabla \rho|^2 \dd x \dd s 
			- \chi \int_0^t \int_{\R^2} p \rho^{p-1} \nabla \cdot (\rho \nabla(G*\xi))  \dd x \dd s.
	\end{multline}
Furthermore, for any $q \geq 2$ we may estimate the last term as follows:
\begin{align} \label{eqn:est_nonloc}
\nonumber		p \left| \int_{\R^2} \rho^{p-1} \nabla \cdot (\rho \nabla (G * \xi)) \dd x \right| &= p(p-1) \left| \int_{\R^2} \rho \rho^{p-2} \nabla \rho \cdot \nabla (G * \xi) \dd x \right| \\
																	\nonumber									  &\leq p(p-1) \int_{\R^2} \big| \rho \rho^{(p-2)/2} \rho^{(p-2)/2} \nabla \rho \cdot \nabla (G * \xi) \big| \dd x \\
																	\nonumber									 \text{(by H\"{o}lder inequality)} & \leq p(p-1) \| \rho^{(p-2)/2}\nabla \rho \|_2 \|\rho^{1+(p-2)/2} \nabla (G * \xi)\|_2 \\
																										 \text{(by Young inequality)} & \leq 2p(p-1)\varepsilon \| \rho^{(p-2)/2}\nabla \rho \|_2^2 + \frac{p(p-1)}{2\varepsilon} \| \rho^{p/2} \nabla (G * \xi) \|_2^2 \\
																	\nonumber									  & \leq 2p(p-1)\varepsilon \| \rho^{(p-2)/2}\nabla \rho \|_2^2 + \frac{p(p-1)}{2\varepsilon} \| \nabla (G * \xi) \|_\infty^2 \|\rho^{p/2}\|_2^2 \\
																	\nonumber									  \text{(by estimate (\ref{eqn:bessel_est}))} & \leq 2p(p-1)\varepsilon \| \rho^{(p-2)/2}\nabla \rho \|_2^2 + C_{q} \frac{p(p-1)}{2\varepsilon}\| \xi \|_q^2 \|\rho\|_p^p.
	\end{align}

Choosing $\varepsilon>0$ such that $2\chi \varepsilon p (p-1) \leq \frac{1}{2} \nu^2 p(p-1)$ yields
	\[
		\| \rho(t) \|_p^p \leq \| \rho_0 \|_p^p + \frac{1}{2\varepsilon} \int_0^t \| \xi \|_p^2 \|\rho\|_p^p \dd s.
	\]
Then, by Gr\"{o}nwall's inequality,
	\begin{equation} \label{eqn:contraction2}
		\| \rho(t) \|_p^p \leq \| \rho_0 \|_p^p \exp \left( \int_0^t \| \xi \|_p^2 \dd s \right) \leq \| \rho_0 \|_p^p \exp \left( 4 \|\rho_0\|_p^2 T \right) \leq 2^p \|\rho_0 \|_p^p
	\end{equation}
for $T$ small enough. Therefore $\rho_\xi \colon B \to B$.

We now proceed with the proof of the contraction property (\ref{eqn:contraction}). Let $\xi, \eta \in B$, and let $\rho_\xi$ and $\rho_\eta$ the corresponding solutions of \eqref{eqn:main_linear} with the same initial data $\rho_0$. Define $\bar{\rho} \defeq \rho_\xi - \rho_\eta$. Then $\bar{\rho}$ satisfies
	\[
		\dd \bar{\rho} = \left( \frac{\nu^2 + \sigma^2}{2} \Delta \bar{\rho} - \chi \nabla \big( \rho_\xi \nabla(G*\xi) - \rho_\eta \nabla(G*\eta) \big)\right) \dd t + \sigma\, \nabla \bar{\rho} \dd W(t).
	\]
Again, using Ito's formula in a similar way to \eqref{eqn:Ito_formula}, we get
	\begin{multline} \nonumber
		\| \bar{\rho} \|_p^p = -\int_0^t \int_{\R^2} \frac{1}{2} \nu^2 p(p-1) \bar{\rho}^{p-2} |\nabla\bar{\rho}|^2 \dd x \dd s  \\ - \chi \int_0^t \int_{\R^2} p \bar{\rho}^{p-1} \nabla \cdot \big( \rho_\xi \nabla (G*\xi) - \rho_\eta \nabla (G * \eta)  \big) \dd x \dd s.
	\end{multline}
Now we integrate the second term by parts, and add and subtract $\rho_\eta \nabla(G *\xi)$,
	\begin{align*}
		-\int_{\R^2} \bar{\rho}^{p-1} &\nabla \cdot \big( \rho_\xi \nabla(G*\xi) - \rho_\eta \nabla(G*\eta)  \big) \dd x \\
		&= \int_{\R^2}  (p-1) \bar{\rho}^{p-2} \nabla\bar{\rho}\big( \rho_\xi \nabla(G*\xi) -\rho_\eta \nabla(G*\xi) + \rho_\eta \nabla(G*\xi) - \rho_\eta \nabla (G*\eta)  \big)\dd x \\
		&= (p-1) \int_{\R^2} \bar{\rho}^{p-1} \nabla\bar{\rho} \cdot \nabla (G*\xi) \dd x  \\
		&\qquad\qquad\qquad\qquad + (p-1) \int_{\R^2} \bar{\rho}^{p-2} \nabla \bar{\rho} \cdot \big( \rho_\eta \nabla(G*(\xi-\eta)) \big) \dd x  \\
		&=: I_1 + I_2.
	\end{align*}
	
Observe that $\bar{\rho}^{p-1} \nabla\bar{\rho} = \bar{\rho}^{p/2}\bar{\rho}^{p/2 - 1} \nabla \bar{\rho} = \frac{2}{p} \bar{\rho}^{p/2} \nabla \big( \bar{\rho}^{p/2}\big)$ as well as $\| \nabla (G * \xi) \|_\infty \leq C_p \| \xi \|_p$ by (\ref{eqn:bessel_est}). Then we estimate
	\begin{align*}
		\int_0^t |I_1(s)|\dd s &\leq \frac{2(p-1)}{p} \int_0^t \left( \int_{\R^2} \left|\nabla \bar{\rho}^{p/2} \right|^2  \dd x \right)^{1/2} \left( \int_{\R^2} \bar{\rho}^p |\nabla (G * \xi)|^2   \right)^{1/2} \dd s \\
									 &\leq \frac{4(p-1)\varepsilon_1}{p} \int_0^t \int_{\R^2} \left|\nabla \bar{\rho}^{p/2} \right|^2 \dd x \dd s + C\frac{(p-1)}{p\varepsilon_1} \int_0^t \int_{\R^2} \bar{\rho}^p \|\xi\|_p^2 \dd x \dd s,
	\end{align*}
where the first inequality follows from H\"{o}lder and the second one from Young inequalities.

We now estimate $I_2$. Again, using H\"{o}lder and Young inequalities, 
	\begin{align*}
		|I_2| &\leq \frac{2(p-1)}{p} \left( \int_{\R^2} \left|\nabla \bar{\rho}^{p/2} \right|^2  \dd x \right)^{1/2} \left( \int_{\R^2}\rho_\eta^2\bar{\rho}^{p-2} |\nabla(G * (\xi-\eta))|^2  \dd x \right)^{1/2} \\
			  & \leq \frac{4(p-1)\varepsilon_2}{p}  \int_{\R^2} \left|\nabla \bar{\rho}^{p/2} \right|^2  \dd x + \frac{p-1}{p\varepsilon_2}  \int_{\R^2} \bar{\rho}^{p-2} \rho_\eta^2 |\nabla(G * (\xi-\eta))|^2 \dd x.
	\end{align*}
Applying the Young's inequality $ab \leq \frac{p-2}{p}a^{p/(p-2)}+\frac{2}{p}b^{p/2}$ to the second term on the right-hand side, we get
	\begin{align*}
		\int_{\R^2} \bar{\rho}^{p-2} \rho_\eta^2 |\nabla (G * (\xi-\eta))|^2 \dd x &\leq \frac{p-2}{p} \int_{\R^2} \bar{\rho}^p\dd x + \frac{2}{p} \int_{\R^2} \rho_\eta^p |\nabla(G * (\xi-\eta))|^p \dd x \\ 
																											 &\leq \frac{p-2}{p} \int_{\R^2} \bar{\rho}^p \dd x + \frac{2}{p} C \|\xi-\eta\|_p^p  \int_{\R^2} \rho_\eta^p \dd x \\
																											 &\xupref{eqn:contraction2}{\leq} \frac{p-2}{p} \int_{\R^2} \bar{\rho}^p \dd x + \frac{2^{p+1}}{p} C \|\rho_0\|_p^p \, \| \xi - \eta \|_p^p.
	\end{align*}
	
Altogether,
	\begin{align*}
		\| \bar{\rho} \|_p^p &\leq -\int_0^t \int_{\R^2} \frac{2(p-1)\nu^2}{p}  \left| \nabla \bar{\rho}^{p/2} \right|^2 \dd x \dd s + \frac{4(p-1)(\varepsilon_1+\varepsilon_2)}{p} \int_0^t \int_{\R^2}  \left| \nabla \bar{\rho}^{p/2} \right|^2 \dd x \dd s \\
									&\qquad + \frac{p-1}{p\varepsilon_1} \int_0^t  C\| \xi\|_p^2 \left( \int_{\R^2}\bar{\rho}^p \dd x \right)\dd s + \frac{(p-1)(p-2)}{p^2 \varepsilon_2} \int_0^t \int_{\R^2} \bar{\rho}^p \dd x \dd s \\
								    	&\qquad\qquad + \frac{(p-1)2^{p+1}}{p^2 \varepsilon_2} C \|\rho_0\|_p^p \int_0^t \| \xi - \eta \|_p^p \dd s.
	\end{align*}
Choosing $\varepsilon_1$ and $\varepsilon_2$ so that $\nu^2 = 2(\varepsilon_1 + \varepsilon_2)$ and noting that $\esssup_{\omega\in\Omega} \sup_{t\in[0,T]} \| \xi \|_p^2 \leq 4 \| \rho_0 \|_p^2$ as $\xi \in B$, we have
	\begin{equation}\label{eqn:difference}
		\| \bar{\rho} \|_p^p \leq \frac{p-1}{p} \left( \frac{4 C_p\| \rho_0\|_p^2}{\varepsilon_1} + \frac{p-2}{p \varepsilon_2}\right) \int_0^t \| \bar{\rho} \|_p^p \dd s + \frac{(p-1)2^{p+1}}{p^2 \varepsilon_2} C_p \|\rho_0\|_p^p \int_0^t \| \xi - \eta \|_p^p \dd s.
	\end{equation}
Using Gronwall's inequality once again, the estimate (\ref{eqn:difference}) yields $\| \bar{\rho} \|_p^p \leq C_1 e^{C_2 t} \int_0^t \| \xi - \eta \|_p^p \dd s$ for some constants $C_1>0$ and $C_2>0.$ This means that $d(\rho_\xi,\rho_\eta) \leq C_1 t e^{C_2 t} d(\xi,\eta)$, i.e., we have a contraction property (\ref{eqn:contraction}) for sufficiently small $T>0$.
\end{proof}	

\bigskip

We now prove Proposition \ref{prop:max_pr}. 

\begin{proof}[Proof of Proposition \ref{prop:max_pr}]
It follows from \cite[Theorem 5.12]{KryMaxPr}, that the solution $\rho$ of the linearized equation (\ref{eqn:main_linear}) satisfies $\rho(x,t) \geq 0$ if $\rho_0(x) \geq 0$, for every $\xi \in S_T$. In particular, this statement holds if $\xi = \rho(x,t)$, which is the fixed point in the proof of Theorem \ref{thm:exist_loc_spec}. The proof of {\bf(i)} for the general noise case (\ref{eqn:main_eqn_type2}) is analogous. 

To prove {\bf (ii)}, let us write the solution of (\ref{eqn:main_eqn_type1}) in the mild form
\begin{equation}\label{eqn:mild1}
\rho(t) = S(t) \rho_0 -  \chi \int_{0}^t S(t-s)\big[\nabla \cdot(\rho(s) \nabla (G*\rho(s)))\big] \dd s + \sigma \int_{0}^t S(t-s)[\nabla \rho(s)] \dd W(s)
\end{equation}
where $S(t)$ is the heat semigroup. Using the properties of convolution (see e.g. \cite{BilKar})
\begin{equation}\label{prop1}
\int_{\R^2} S(t) v(x) \dd x = \int_{\R^2} v(x) \dd x,
\end{equation}
and
\begin{equation}\label{prop2}
\int_{\R^2} \nabla S(t) v(x) \dd x = 0 \ \text{ for every } v \in L^1(\R^2),
\end{equation}
we obtain
\[
\int_{\R^2} \rho(x,t) \dd x = \int_{\R^2} \rho_0(x) \dd x
\]
from (\ref{eqn:mild1}).

Finally, to show {\bf (iii)}, for fixed $t>0$ from the interval of the existence of the solution, we represent the mild solution of (\ref{eqn:main_eqn_type2}) as

\begin{equation}\label{eqn:mild2}
\rho(t) = S(t) \rho_0 -  \chi \int_{0}^t S(t-s)\big[\nabla \cdot(\rho(s) \nabla (G*\rho(s)))\big] \dd s + \int_{0}^t S(t-s)\big[\Phi(\rho, \nabla \rho) \dd W(x,s)\big].
\end{equation}
Taking the expected values of both sides, (\ref{eqn:mild2}) becomes
\begin{equation}\label{eqn:mild2_1}
\E \rho(t) = S(t) \rho_0 -  \chi \E \int_{0}^t S(t-s)\big[\nabla \cdot(\rho(s) \nabla (G*\rho(s)))\big] \dd s.
\end{equation}
Then using (\ref{prop1}), (\ref{prop2}) and Fubini's theorem we get
\[
\E \int_{\R^2} \rho(t,x) \dd x = \int_{\R^2} \rho_0(x) \dd x,
\]
as stated.
\end{proof}

\bigskip
\subsection{Global existence of solutions for (\ref{eqn:main_eqn_type1})}\label{subsec:global_exist}

In this section we show that for sufficiently small initial masses, the local solution, obtained in the previous section, is in fact global. 

\begin{theorem} \label{th:global}
Let $\int_{R^2} \rho_0 \dd x = m_0$. There exists a constant $C = C(p)$, such that if 
	\begin{equation}\label{eqn:smallness_cond1}
	 C \left( \frac{-\nu^2 p (p-1)}{2} + \chi \right) m_0^{-p/(p-1)} + \chi m_0^{(p-2)/(p-1)} \leq 0 \qquad \text{ for } p \geq 2,
	\end{equation}

then the solution of \eqref{eqn:main_eqn} exists for all $t \geq 0$ and $\| \rho(t) \|_p \leq \| \rho_0 \|_p$ a.s.
\end{theorem}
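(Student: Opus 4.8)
The plan is to convert the It\^o identity of Proposition~\ref{prop:Ito} into a closed ordinary differential inequality for $y(t)\defeq\norm{\rho(t)}_p^p$, to read off from \eqref{eqn:smallness_cond1} that the dissipation dominates so that $y$ is nonincreasing, and then to upgrade the local solution of Theorem~\ref{thm:exist_loc_spec} to a global one by iteration.

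\emph{Energy identity.} I would start from Proposition~\ref{prop:Ito}; there is no martingale term, since $\int_{\R^2}p\rho^{p-1}\nabla\rho\,\dd x=\int_{\R^2}\nabla(\rho^p)\,\dd x=0$, so almost surely and for all $t$ in the interval of existence
\[
y(t)-y(0)=-\frac{\nu^2p(p-1)}{2}\int_0^t\!\!\int_{\R^2}\rho^{p-2}|\nabla\rho|^2\,\dd x\,\dd s-\chi p\int_0^t\!\!\int_{\R^2}\rho^{p-1}\nabla\cdot(\rho\nabla c)\,\dd x\,\dd s .
\]
Integrating the last term by parts and using $-\Delta c=\rho$ rewrites it as $\chi(p-1)\int_{\R^2}\rho^{p+1}\,\dd x$. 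Note the diffusion coefficient surviving the It\^o correction is $\nu^2=a^2-\sigma^2$, not $a^2$: the noise has consumed part of the diffusion, which is exactly why the threshold \eqref{eqn:smallness_cond1} is expressed through $\nu^2$.

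\emph{Two Gagliardo--Nirenberg estimates.} By Proposition~\ref{prop:max_pr}(ii) the mass is conserved, $\norm{\rho(t)}_1=m_0$. Setting $v\defeq\rho^{p/2}$ (so that $\norm{v}_{L^{2/p}(\R^2)}^{2/p}=m_0$, $\norm{v}_2^2=y$ and $\norm{\nabla v}_2^2=\tfrac{p^2}{4}\int_{\R^2}\rho^{p-2}|\nabla\rho|^2\,\dd x$), I would use the two-dimensional Gagliardo--Nirenberg--Sobolev inequality together with Young's inequality to obtain, for every $\e>0$, an interpolation with tail,
\[
\int_{\R^2}\rho^{p+1}\,\dd x\le\e\int_{\R^2}\rho^{p-2}|\nabla\rho|^2\,\dd x+C_\e\,m_0^{(p-2)/(p-1)}\,y^{p/(p-1)} ,
\]
which is scale-consistent precisely in dimension two (all three terms scale like $\lambda^{2p}$ under the mass-preserving rescaling $\rho\mapsto\lambda^2\rho(\lambda\,\cdot)$), and the same GNS read in reverse gives a constant $c_p>0$ with $\int_{\R^2}\rho^{p-2}|\nabla\rho|^2\,\dd x\ge c_p\,m_0^{-p/(p-1)}\,y^{p/(p-1)}$.

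\emph{Differential inequality and globalization.} Substituting the first estimate into the energy identity and choosing $\e=\tfrac{1}{p-1}$, the coefficient of $\int_{\R^2}\rho^{p-2}|\nabla\rho|^2\,\dd x$ becomes $-\tfrac{\nu^2p(p-1)}{2}+\chi$, which \eqref{eqn:smallness_cond1} forces to be negative; bounding that (now negative) term from above by the reverse GNS yields, for a positive constant $K_p$,
\[
\frac{\dd}{\dd t}y(t)\le K_p\Big[C\big(-\tfrac{\nu^2p(p-1)}{2}+\chi\big)m_0^{-p/(p-1)}+\chi\,m_0^{(p-2)/(p-1)}\Big]\,y(t)^{p/(p-1)} ,
\]
where the bracket is exactly the left-hand side of \eqref{eqn:smallness_cond1}, with $C=C(p)$ absorbing $c_p$ and $C_\e$. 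Hence under \eqref{eqn:smallness_cond1} we get $\tfrac{\dd}{\dd t}y\le0$, i.e.\ $\norm{\rho(t)}_p\le\norm{\rho_0}_p$ almost surely on the whole interval of existence (the case $\rho_0\equiv0$ being trivial). Since then $\norm{\rho(t)}_p$ and the mass $m_0$ stay bounded, and since the existence time furnished by Theorem~\ref{thm:exist_loc_spec} restarted from a time $t_0$ with $\rho(t_0)\in L^1(\R^2)\cap L^p(\R^2)\cap H^1(\R^2)$ — which holds for a.e.\ $t_0$, as $\rho\in L^2_{\mathcal{F_W}}(0,T,H^1(\R^2))$ — depends only on $\norm{\rho(t_0)}_p$ and $m_0$, it is bounded below by a fixed $\delta_0>0$; iterating reaches every finite time, so the solution is global and $\norm{\rho(t)}_p\le\norm{\rho_0}_p$ for all $t\ge0$ a.s.

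\emph{Main obstacle.} Since Proposition~\ref{prop:Ito} already disposes of the stochastic calculus, the crux is the sharp deterministic Keller--Segel balance in the critical dimension $d=2$: the interpolation step must be performed with the \emph{exact} homogeneity, so that $\int\rho^{p+1}$, $\int\rho^{p-2}|\nabla\rho|^2$ and $m_0^{(p-2)/(p-1)}y^{p/(p-1)}$ all carry the same scaling and no term superlinear in $y$ survives (such a term would only give a blow-up-type inequality, never a uniform bound), and one has to track the constants closely enough that the ``dissipation wins'' condition comes out precisely as \eqref{eqn:smallness_cond1}.
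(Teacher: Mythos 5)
Your proposal is correct and follows the same overall strategy as the paper (Itô identity from Proposition~\ref{prop:Ito} with the martingale term vanishing, a differential inequality for $\|\rho(t)\|_p^p$, Gagliardo--Nirenberg/Nash interpolation against the conserved mass, and the sign condition \eqref{eqn:smallness_cond1}), but it handles the aggregation term by a genuinely different decomposition. The paper never produces $\int_{\R^2}\rho^{p+1}\dd x$: in \eqref{eqn:est_nonloc} it keeps the nonlocal structure, bounding $\nabla(G*\rho)$ in $L^\infty$ via the Sobolev estimate \eqref{eqn:bessel_est}, which yields the term $\tfrac{C\chi}{\e}\|\rho\|_q^2\|\rho\|_p^p$ that is then interpolated ($\|\rho\|_q^2\le m_0^{(p-2)/(p-1)}\|\rho\|_p^{p/(p-1)}$) to produce the factor $\chi m_0^{(p-2)/(p-1)}$; the lower bound $\|\nabla(\rho^{p/2})\|_2^2\ge C m_0^{-p/(p-1)}\|\rho\|_p^{2p/(p-1)}$ is common to both arguments. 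You instead integrate by parts and use $-\Delta c=\rho$ to localize the bad term as $\chi(p-1)\int\rho^{p+1}\dd x$, then absorb it by a single GNS-with-tail inequality; this is the classical deterministic Keller--Segel computation, it is scale-consistent as you check, and it lands on the same bracket as \eqref{eqn:smallness_cond1} up to the harmless relabeling of $C(p)$ (your constant multiplies the dissipative term through $c_p/C_\e$ rather than appearing exactly as in the paper's display, which the statement permits). Two small points in your favor relative to the paper: you make the continuation/globalization step explicit (restarting the local theory from $\rho(t_0)\in H^1$ for a.e.\ $t_0$, with existence time controlled by the a priori bound), which the paper leaves implicit; and your route avoids the separate $p=2$ Nash-inequality case, since $(p-2)/(p-1)=0$ degenerates gracefully. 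The one caveat is that your identity uses $-\Delta(G*\rho)=\rho$ literally; if $G$ is the genuine Bessel kernel one instead gets $\Delta(G*\rho)=G*\rho-\rho$, but the extra term $-\chi(p-1)\int\rho^p(G*\rho)\dd x$ is nonpositive by Proposition~\ref{prop:max_pr}(i), so your upper bound survives, and the paper itself uses the same convention in the proof of Theorem~\ref{thm:strong}.
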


\begin{proof}  Let us start with the proof of this result for $p=2$, which is technically simpler due to Nash inequality.  By the Sobolev embedding $W^{k,p}(\R^n) \subset C^{r,\alpha}(\R^n)$, $\alpha\in(0,1]$, $\frac{1}{p}-\frac{k}{n} = -\frac{r+\alpha}{n}$, with $p=n=k=2$ and $\alpha=1$, $r=0$, we have 
	\[
		\| \rho \|_{C^{0,1}(\R^2)} \leq C \| \rho \|_{W^{2,2}(\R^2)}.
	\]
Since $\| \nabla u \|_{L^\infty(\R^2)} \leq C \| u \|_{C^{0,1}(\R^2)}$ for any $u\in W^{1,\infty}(\R^2) \cap C^{0,1}(\R^2)$, we get
	\[
		\| \nabla(G * \rho) \|_\infty \leq C \| G * \rho \|_{C^{0,1}(\R^2)} \leq C \| G*\rho \|_{W^{2,2}(\R^2)} \leq C\| \rho \|_2.
	\]
Now, combining the Ito's formula (\ref{eqn:Ito_formula}) and the estimate (\ref{eqn:est_nonloc}) with $p=q=2$, we obtain

	\begin{align*}
		\| \rho \|_2^2 - \| \rho_0\|_2^2 & =  -\nu^2 \int_0^t \int_{\R^2} | \nabla \rho |^2 \dd x \dd s - 2 \chi \int_0^t \int_{\R^2}  \nabla \cdot (\rho \nabla(G*\rho))  \dd x \dd s  \\
&\leq -\nu^2 \int_0^t \int_{\R^2} | \nabla \rho |^2 \dd x \dd s +  \chi \varepsilon \int_0^t \| \nabla \rho \|_2^2 \dd s + \chi C \int_0^t \| \rho \|_q^2 \| \rho \|_2^2 \dd s \\
													&= \big( -\nu^2 + \chi \big) \int_0^t \|\nabla \rho\|_2^2 \dd s +\chi C \int_0^t \| \rho \|_2^4 \dd s.
	\end{align*}
	
Using the Nash inequality $\| u \|_2^{1+2/n} \leq C \| u \|_1^{2/n} \| \nabla u \|_2$ with $n=2$ and $\| \rho \|_1 = m_0$, we have $\| \rho \|_2^2 \leq C m_0 \| \nabla \rho \|_2$. Thus
	\[
		\| \rho \|_2^2 - \| \rho_0\|_2^2 \leq \left( -\nu^2 + \chi  +C \chi m_0^2 \right) \int_0^t \| \nabla \rho \|_2^2 \dd s
	\]
	
The above inequality implies that under (\ref{eqn:smallness_cond}) we have $\| \rho(t) \|_2^2 \leq \| \rho_0 \|_2^2$ for all $t \geq 0$, yielding the global existence of a weak solution for $p=2$.

\medskip

We now proceed with the proof of Theorem \ref{th:global} for $p>2$.  Once again, combining (\ref{eqn:Ito_formula}) and (\ref{eqn:est_nonloc}) with $p>2$ and $q>2$, we have
	\[
		\| \rho \|_p^p - \| \rho_0 \|_p^p \leq \left( \frac{-\nu^2 p(p-1)}{2} + \varepsilon \chi \right) \int_0^t \| \nabla \rho^{\frac{p}{2}} \|_2^2 \dd s + \frac{C \chi}{\varepsilon} \int_0^t \|\rho \|_q^2 \| \rho \|_p^p \dd s.
	\]
Next we use the Gagliardo-Nirenberg inequality, $\| D^j u \|_{\tld{p}} \leq C \| D^m u \|_r^\alpha \| u \|_{\tld{q}}^{1-\alpha}$ with $\frac{1}{\tld{p}} = \frac{j}{n} + \left(\frac{1}{r}-\frac{m}{n}\right)\alpha + \frac{1-\alpha}{\tld{q}}$. We take $j=0$, $r=2$, $m=1$, $n=2$, $\tld{p}=s \geq 1$ so that $\tld{q} = (1-\alpha) s \geq 1$, and apply the inequality to $u=\rho^{p/2}$:
	\[
		\| \rho^{p/2} \|_s \leq C \| \nabla \big(\rho^{p/2}\big) \|_2^\alpha \, \| \rho^{p/2} \|_{(1-\alpha)s}^{1-\alpha}.
	\]
	
On the other hand, 
	\[
		\| \rho^{p/2} \|_s = \| \rho \|_{ps/2}^{p/2} \qquad \text{and} \qquad \|\rho^{p/2}\|_{(1-\alpha)s}^{1-\alpha} = \| \rho \|_{p(1-\alpha)s/2}^{p(1-\alpha)/2}.
	\]
So, the Gagliardo-Nirenberg inequality for $\rho^{p/2}$ becomes
	\begin{equation}\label{eqn:GagNir_ineq}
		\| \rho \|_{ps/2}^{p/2} \leq C \| \nabla \big(\rho^{p/2}\big) \|_2^\alpha \, \| \rho \|_{p(1-\alpha)s/2}^{p(1-\alpha)/2}.
	\end{equation}
Next, we apply the interpolation inequality $\| f \|_{\tld{p}}^{\tld{p}} \leq \| f \|_{p_0}^{p_0(1-\theta)} \|f\|_{p_1}^{p_1 \theta}$ with $\tld{p} = (1-\theta)p_0 + \theta p_1$. Taking
	\[
		\tld{p} = \frac{p(1-\alpha)s}{2}, \quad p_0 = \frac{ps}{2}, \quad p_1=1,
	\]
and recalling $\| \rho \|_1=m_0$, we have  $\| \rho \|_{p(1-\alpha)s/2}^{p(1-\alpha)s/2} \leq m_0^\theta \| \rho \|_{ps/2}^{ps(1-\theta)/2}$. So, we get that $ \| \rho \|_{p(1-\alpha)s/2}^{p(1-\alpha)/2} \leq m_0^{\theta/s}\| \rho \|_{ps/2}^{p(1-\theta)/2}.$ Now, by \eqref{eqn:GagNir_ineq},
	\[
		\| \nabla \big( \rho^{p/2} \big) \|_2 \geq m_0^{-\theta/(\alpha s)} \| \rho \|_{ps/2}^{p\theta/(2\alpha)}.
	\]
Also, $\tld{p} = (1-\theta)p_0 + \theta p_1$ implies that $\frac{\theta}{\alpha}= \frac{ps}{ps-2}$. Hence,
	\[
		\| \rho \|_{ps/2}^{p\theta/(2\alpha)} = \left( \int_{\R^2} \rho^{ps/2} \dd x \right)^{p/(ps-2)} \qquad \text{ and } \qquad m_0^{-\theta/(\alpha s)} = m_0^{-p/(ps-2)}.
	\]
Altogether,
	\begin{multline} \nonumber
		\| \rho \|_p^p - \| \rho_0 \|_p^p  \leq C \left( \frac{-\nu^2 p (p-1)}{2} + \varepsilon \chi \right) m_0^{-2p/(ps-2)} \int_0^t \left( \int_{\R^2} \rho^{ps/2} \dd x \right)^{2p/(ps-2)} \dd s \\ + \frac{\chi}{\varepsilon} \int_0^t \| \rho \|_q^2 \| \rho \|_p^p \dd s.
	\end{multline}
	
Now taking $s=2$ and $\varepsilon=1$ the right-hand side becomes
	\begin{multline} \nonumber
		\int_0^t C \left( \frac{-\nu^2 p (p-1)}{2} + \chi \right) m_0^{-p/(p-1)} \left( \int_{\R^2} \rho^{p} \dd x \right)^{p/(p-1)}  + \chi \| \rho \|_p^p \| \rho \|_q^2 \dd s \\
													= \int_0^t \| \rho \|_p^p \left[ C \left( \frac{-\nu^2 p (p-1)}{2} + \chi \right) m_0^{-p/(p-1)} \| \rho \|_p^{p/(p-1)} + \chi \| \rho \|_q^2 \right] \dd s.
	\end{multline}
	
We use the interpolation inequality $\| \rho \|_q^2 \leq \| \rho \|_p^{2(1-\theta)}\| \rho \|_1^{2\theta}$ with $\frac{1}{q}= \frac{1-\theta}{p}+\theta$ to obtain $\| \rho \|_q^2 \leq \| \rho \|_p^{2(1-\theta)} m_0^{2\theta}$. Noting that $2(1-\theta) = \frac{p}{p-1}$ and $2\theta = \frac{p-2}{p-1}$, we obtain that
	\[
		\| \rho \|_p^p - \| \rho_0 \|_p^p \leq \int_0^t \left[ C \left( \frac{-\nu^2 p (p-1)}{2} + \chi \right) m_0^{-p/(p-1)} + \chi m_0^{(p-2)/(p-1)} \right]\| \rho \|_p^p \| \rho \|_{p}^{p/(p-1)} \dd s\leq  0
	\]
for $m_0$ sufficiently small.
\end{proof}

\begin{corollary}
Let $\rho^1(x,t)$ and $\rho^2(x,t)$ be two solutions of (\ref{eqn:main_eqn_type1}) with both $\rho^1(x,0)$ and $\rho^2(x,0)$ satisfying (\ref{eqn:smallness_cond1}).
Then there is $C>0$, independent of $\rho^1$ and $\rho^2$, such that
\begin{equation}\label{eqn:cont_dep}
\E \sup_{t \in [0,T]}\|\rho^1(x,t) - \rho^2(x,t)\|_{p}^p \leq C \|\rho^1(x,0) - \rho^2(x,0)\|_{p}^p 
\end{equation}
\end{corollary}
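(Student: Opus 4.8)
The plan is to mimic the contraction/a priori estimates from the proofs of Theorem~\ref{thm:exist_loc_spec} and Theorem~\ref{th:global}, but now tracking the dependence on \emph{two} initial data. Write $\bar\rho \defeq \rho^1 - \rho^2$, which solves
\[
	\dd\bar\rho = \left(\frac{\nu^2+\sigma^2}{2}\Delta\bar\rho - \chi\nabla\cdot\big(\rho^1\nabla(G*\rho^1) - \rho^2\nabla(G*\rho^2)\big)\right)\dd t + \sigma\,\nabla\bar\rho\,\dd W(t),
\]
with $\bar\rho(0) = \rho^1(0) - \rho^2(0)$. Applying the generalized It\^o formula of Proposition~\ref{prop:Ito} to $\|\bar\rho\|_p^p$ and integrating the nonlocal term by parts exactly as in the contraction estimate of Theorem~\ref{thm:exist_loc_spec} (adding and subtracting $\rho^2\nabla(G*\rho^1)$) gives, after using H\"older, Young, \eqref{eqn:bessel_est}, and the Gagliardo--Nirenberg/interpolation machinery,
\[
	\|\bar\rho(t)\|_p^p \leq \|\bar\rho(0)\|_p^p + \int_0^t \Big(A(s)\,\|\bar\rho\|_p^p + B(s)\,\|\rho^2\|_p^p\,\|\bar\rho\|_p^{\,p/(p-1)}\Big)\dd s
\]
for suitable coefficients $A,B$ built from $\|\rho^1\|_p$, $\|\rho^2\|_p$, $\nu^2$, $\chi$, $\varepsilon$'s; the key point is that there is \emph{no} inhomogeneous term not multiplied by a power of $\|\bar\rho\|_p$, so the estimate is of Gr\"onwall type in $\|\bar\rho\|_p^p$.

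The crucial simplification is that both $\rho^1(0)$ and $\rho^2(0)$ satisfy the smallness condition \eqref{eqn:smallness_cond1}, so by Theorem~\ref{th:global} we have the global a priori bounds $\|\rho^i(t)\|_p \leq \|\rho^i(0)\|_p$ for all $t\geq 0$, $i=1,2$, almost surely. This makes the coefficients $A(s)$ and $B(s)\|\rho^2\|_p^p$ bounded by a deterministic constant depending only on $\|\rho^1(0)\|_p$, $\|\rho^2(0)\|_p$ (hence, under \eqref{eqn:smallness_cond1}, on $p$ and fixed parameters), uniformly in $s$ and $\omega$. One should be slightly careful with the term $\|\bar\rho\|_p^{\,p/(p-1)}$: since $p/(p-1)\leq 2$, either absorb it via Young's inequality into $\|\bar\rho\|_p^p$ plus a constant times $\|\bar\rho\|_p^{p}$-lower-order contributions, or simply bound $\|\bar\rho\|_p \leq \|\rho^1(0)\|_p + \|\rho^2(0)\|_p$ a priori so that $\|\bar\rho\|_p^{p/(p-1)} \leq C\|\bar\rho\|_p^{\,p/(p-1)-p}\cdot\|\bar\rho\|_p^p \cdot(\dots)$ — cleanest is Young: $\|\rho^2\|_p^p\|\bar\rho\|_p^{p/(p-1)} \leq C\|\rho^2\|_p^p + C\|\bar\rho\|_p^p$, but the constant-only piece must then be handled, which is why invoking the a priori bound directly to write $\|\rho^2\|_p^p\|\bar\rho\|_p^{p/(p-1)} \le C(\|\rho^2(0)\|_p)\,\|\bar\rho\|_p^{p}$ (using $\|\bar\rho\|_p^{p/(p-1)-p}$ bounded below... ) is delicate. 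The safe route: since $p/(p-1) < p$ for $p>1$ and $\|\bar\rho\|_p$ is a priori bounded, $\|\bar\rho\|_p^{p/(p-1)} \le C\,\|\bar\rho\|_p^{p}$ only if $\|\bar\rho\|_p$ is bounded \emph{below} away from $0$, which it need not be; instead use $\|\bar\rho\|_p^{p/(p-1)} \le 1 + \|\bar\rho\|_p^{p}$ trivially when $p/(p-1)\le p$, but the ``$1$'' is again an inhomogeneous term. I expect this to be the main technical obstacle, and it is resolved by noting $\|\bar\rho\|_p^{p/(p-1)} = \|\bar\rho\|_p^{p/(p-1)}$ with exponent $\le p$ means $\|\bar\rho\|_p^{p/(p-1)} \le \|\bar\rho(0)\|_p^{\,p/(p-1)-p}\,\|\bar\rho\|_p^{p} $ is false in general — so one instead keeps the term as $C\int_0^t\|\bar\rho\|_p^{p/(p-1)}\dd s$ and applies a Bihari--LaSalle (nonlinear Gr\"onwall) inequality, or simply observes that since the target inequality \eqref{eqn:cont_dep} is a bound on $\E\sup_{[0,T]}\|\bar\rho\|_p^p$ by $\|\bar\rho(0)\|_p^p$, one may first reduce to the case $\|\bar\rho(0)\|_p \le 1$ by scaling/homogeneity considerations, making all sub-$p$ powers controllable.

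Concretely, the steps in order are: (1) write the equation for $\bar\rho$; (2) apply the It\^o formula for $\|\bar\rho\|_p^p$, integrate the nonlocal term by parts with the add-and-subtract trick, and bound each resulting term by H\"older, Young, and \eqref{eqn:bessel_est} exactly as in Theorem~\ref{thm:exist_loc_spec}; (3) invoke Theorem~\ref{th:global} to replace $\|\rho^i(s)\|_p$ by $\|\rho^i(0)\|_p$, obtaining a closed differential inequality $\tfrac{d}{dt}\|\bar\rho\|_p^p \le C_1\|\bar\rho\|_p^p + C_2\|\bar\rho\|_p^{p/(p-1)}$ with $C_1,C_2$ deterministic; (4) deduce pathwise $\sup_{t\in[0,T]}\|\bar\rho(t)\|_p^p \le C(T)\,\|\bar\rho(0)\|_p^p$ via Gr\"onwall (after handling the sublinear term, e.g. by first normalizing $\|\bar\rho(0)\|_p\le 1$ or by a Bihari-type argument), where $C(T)$ is independent of $\omega$; (5) take expectations to get \eqref{eqn:cont_dep}, with the $\sup$ inside the expectation surviving because the pathwise bound already controls the supremum. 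The martingale term vanishes in the $\|\bar\rho\|_p^p$ identity (as in Proposition~\ref{prop:Ito}, since $\int_{\R^2}\nabla(\bar\rho^p)\dd x = 0$), so no Burkholder--Davis--Gundy estimate is needed, which is what makes the pathwise (hence $\E\sup$) bound immediate.
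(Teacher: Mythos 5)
Your overall strategy is exactly the paper's: the published proof is a two-line remark that \eqref{eqn:smallness_cond1} plus Theorem \ref{th:global} gives global existence and the uniform bounds $\|\rho^i(t)\|_p \le \|\rho^i(0)\|_p$, after which one reruns the contraction estimate \eqref{eqn:contraction} from Theorem \ref{thm:exist_loc_spec} with $\xi=\rho^1$, $\eta=\rho^2$, $\rho_\xi=\rho^1$, $\rho_\eta=\rho^2$, keeping the nonzero initial difference, and closes with Gr\"onwall. Your steps (1), (2), (3), (5) and the observation that the martingale term vanishes identically (so no Burkholder--Davis--Gundy is needed and the pathwise bound gives the $\E\sup$ bound for free) all match.

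The problem is the term $B(s)\,\|\rho^2\|_p^p\,\|\bar\rho\|_p^{p/(p-1)}$, which you identify as ``the main technical obstacle'': it does not occur. You have imported the exponent $p/(p-1)$ from the Gagliardo--Nirenberg/interpolation step of Theorem \ref{th:global}, but that step is used only to prove the a priori bound $\|\rho(t)\|_p\le\|\rho_0\|_p$ for a single solution; it plays no role in the difference estimate. In the difference estimate the add-and-subtract decomposition produces (see \eqref{eqn:difference}) only terms of the form $\bigl(\text{const}\cdot\|\rho^1\|_p^2\bigr)\|\bar\rho\|_p^p$, $\text{const}\cdot\|\bar\rho\|_p^p$, and $\text{const}\cdot\|\rho^2\|_p^p\,\|\xi-\eta\|_p^p$; since here $\xi-\eta=\bar\rho$, every term is \emph{linear} in $\|\bar\rho\|_p^p$, and the ordinary linear Gr\"onwall inequality yields $\|\bar\rho(t)\|_p^p\le e^{Ct}\|\bar\rho(0)\|_p^p$ directly. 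This matters because none of your proposed repairs for the phantom sublinear term actually work: the equation has no scaling symmetry that reduces to $\|\bar\rho(0)\|_p\le 1$, the bound $\|\bar\rho\|_p^{p/(p-1)}\le C\|\bar\rho\|_p^p$ fails near zero as you note, and a Bihari-type inequality for $\dot u\le C_1u+C_2u^{1/(p-1)}$ would not produce a bound proportional to $u(0)$, so the corollary as stated would not follow. Drop that term and the argument is complete and coincides with the paper's.
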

\begin{proof}
The condition (\ref{eqn:smallness_cond1}) guarantees that both solutions $\rho^1(x,t)$ and $\rho^2(x,t)$ are defined globally and bounded in $L^p(\R^2)$. The proof of this corollary now follows the lines of the proof of the contraction property \eqref{eqn:contraction} in Theorem \ref{thm:exist_loc_spec}.
\end{proof}

\bigskip
\subsection{Local existence of strong solutions for (\ref{eqn:main_eqn_type2}).}\label{subsec:loc_exist_gen_noise}

In this section we will apply an abstract result by Debussche, Glatt-Holtz, and Temam \cite{DebGHTem} to establish the existence of local strong solutions for the equation \eqref{eqn:main_eqn_type2}.  For the convenience of the reader, we introduce the necessary notions and results from \cite{DebGHTem}.

Fix a pair of separable Hilbert spaces $V \subset H$, such that the embedding is compact and dense. This way we have a Gelfand triple $V \subset H \subset V^{'}$, where $V^{'}$ is the dual of $V$, relative to dot products in $H$. Introduce $(\cdot, \cdot)$,  $|\cdot|$, $((\cdot, \cdot))$,  $\|\cdot\|$ to be the norms and inner products in $H$ and $V$ respectively. The duality
product between $V^{'}$ and $V$ is written $\langle \cdot, \cdot \rangle$.

The framework developed in \cite{DebGHTem} deals with abstract stochastic evolution equations of the form
	\begin{equation}\label{eqn:temam}
	 \dd U+(AU+B(U,U)+F(U)) \dd t=\sigma(U)\dd W, \qquad  U(0)=U_0.
	\end{equation}
Here $A \colon D(A) \subset H \rightarrow H$ is an unbounded, densely defined, bijective, linear operator such that $\left(A U, U^\sharp \right)=\left(\left(U, U^{\sharp}\right)\right)$ for all $U, U^{\sharp} \in D(A)$. The operator $B$ is a bilinear form mapping $V \times D(A)$ continuously to $V^{\prime}$ and $D(A) \times D(A)$ continuously to $H$, and the nonlinearity $F:V \to H$ satisfies the Lipschitz condition. Finally, we assume that
\[
\sigma: H \to L^2(\mathcal{U},H),
\]
also satisfies the Lipschitz condition, where $\mathcal{U}$ is an auxiliary Hilbert space and $L^2(\mathcal{U},H)$ is the space of Hilbert-Schmidt operators between $\mathcal{U}$
and $H$.

\begin{definition}[cf. Definition 2.2 in \cite{DebGHTem}]\label{def:strong} 
Let $ S = (\Omega,F, \{F_t\},t\geq 0, P, W)$ be a fixed
stochastic basis and suppose $U_0 \in V$.
 
\begin{enumerate}[label={\bf (\roman{enumi})}] \itemsep=0.8em
\item A pair $(U, \tau)$ is  a {\it local strong solution} of (\ref{eqn:temam}) if $\tau$ is a strictly positive stopping time and $U(\cdot \land \tau)$
is an $F_t$-adapted process in $V$ so that
\begin{equation}\label{eqn:2.18}
U(\cdot \land \tau) \in L^2(\Omega, C([0,\infty); V)) \cap L^2(\Omega, L^2_{loc}([0, \infty); D(A)))
\end{equation}
and
\begin{equation}\label{eqn:2.19}
U(t \land \tau) + \int_{0}^{t \land \tau} (AU + B(U,U) + F(U)) \dd s = U_0 +  \int_{0}^{t \land \tau} \sigma(U) \dd W.
\end{equation}
\item A strong solution of \eqref{eqn:temam} is {\it pathwise unique} up to the stopping time $\tau>0$ if for any
pair of strong solutions $(U^1, \tau)$ and $(U^2, \tau)$ with the same initial condition $U_0$ we have
\[
P\{U^1(t\land \tau) - U^2(t\land \tau) = 0 \text{ for all } t \geq 0\} = 1.
\]
\end{enumerate}
\end{definition}

The following result was shown in \cite[Theorem 2.1]{DebGHTem}:
\begin{theorem}\label{thm:temam}
Suppose the bilinear form $B(U,U)$ satisfies
\begin{equation}
|\langle B (U, U^{\sharp}), U^{\flat}\rangle | \leq c_{0}\|U\| |A U^{\sharp}| \|U^{\flat}\| \quad \text { for all } U, U^{\flat} \in V, \ U^{\sharp} \in D(A) \label{eqn:bilinear_2}
\end{equation}
and
\begin{multline}
\| \langle B(U, U^{\sharp}), U^{\flat}\rangle \| \leq c_{0}\|U\|^{1 / 2} |A U|^{1 / 2} \|U^{\sharp}\|^{1 / 2} |A U^{\sharp}|^{1 / 2} |U^{\flat}| \\ \text { for all } U, U^{\sharp} \in D(A), U^{\flat} \in H. \label{eqn:bilinear_3}
	\end{multline}
Then the equation (\ref{eqn:temam}) has a unique local strong solution, defined for $t\in[0,\zeta(\omega))$, where $\zeta(\omega)$ satisfies
	\[
		\sup_{t\in[0,\zeta]} \|U\|^2 + \int_0^\zeta |AU|^2\dd s = \infty
	\]
almost surely on the set $\{\zeta < \infty \}$.
\end{theorem}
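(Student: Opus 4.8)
The plan is to establish Theorem~\ref{thm:temam} by a Galerkin approximation combined with a truncation of the nonlinearity and a stochastic compactness argument, in the style of \cite{DebGHTem}. Since $A$ is a densely defined bijection and $V$ embeds compactly and densely into $H$, the operator $A^{-1}$ is compact, so there is an orthonormal basis $\{h_n\}$ of $H$ consisting of eigenfunctions of $A$; let $P_n$ be the orthogonal projection onto $\mathrm{span}\{h_1,\dots,h_n\}$. Fix $R>0$ and a smooth cutoff $\theta_R\colon[0,\infty)\to[0,1]$ with $\theta_R\equiv 1$ on $[0,R]$ and $\theta_R\equiv 0$ on $[2R,\infty)$, and consider the finite-dimensional system
\[
\dd U_n + \bigl(A U_n + \theta_R(\|U_n\|)\,B(U_n,U_n) + \theta_R(\|U_n\|)\,F(U_n)\bigr)\dd t = \theta_R(\|U_n\|)\,P_n\sigma(U_n)\,\dd W,\qquad U_n(0)=P_n U_0 .
\]
Its coefficients are locally Lipschitz and, thanks to the cutoff, grow at most linearly in $U_n$, so this SDE has a unique global solution $U_n$.

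Next I would derive estimates uniform in $n$. Applying It\^o's formula to $\|U_n(t)\|^2=(AU_n,U_n)$ — using $(AU_n,AU_n)=|AU_n|^2$ to produce the dissipative term — and then the structural bilinear bounds \eqref{eqn:bilinear_2}--\eqref{eqn:bilinear_3} to control $\langle B(U_n,U_n),\cdot\rangle$, the Lipschitz bounds on $F$ and $\sigma$, Young's inequality, the cutoff $\theta_R$ to keep the nonlinear contributions integrable, and the Burkholder--Davis--Gundy inequality for the stochastic term, one obtains
\[
\E\sup_{t\le T}\|U_n(t)\|^2 + \E\int_0^T |AU_n|^2\,\dd s \le C(R,T,U_0)
\]
uniformly in $n$, together with a uniform bound on a fractional-in-time norm of $U_n$ with values in $V'$. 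By the Aubin--Lions--Simon lemma (using $D(A)\Subset V\Subset H$) the laws of $U_n$ are then tight on $L^2(0,T;V)\cap C([0,T];V')$, so by Prokhorov and the Skorokhod representation theorem there are, on a new probability space, copies $\tld U_n\to\tld U$ a.s.\ in these topologies. Since $\theta_R(\|\cdot\|)$ is bounded and continuous, $B$ is continuous from $V\times D(A)$ into $V'$, and $\sigma$ is Lipschitz, one can pass to the limit and identify $\tld U$ as a martingale solution of the truncated equation on $[0,T]$, hence, by a diagonal argument in $T$, on $[0,\infty)$.

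To upgrade to a probabilistically strong solution and remove the truncation I would argue as follows. For two solutions of the truncated equation with the same data, It\^o's formula applied to the squared $V$-norm of their difference, using the local Lipschitz property of $\theta_R(\|\cdot\|)B(\cdot,\cdot)$, of $\theta_R(\|\cdot\|)F$ and of $\sigma$ together with \eqref{eqn:bilinear_2}--\eqref{eqn:bilinear_3} to bound the bilinear contribution, yields pathwise uniqueness by Gr\"{o}nwall's inequality. Existence of a martingale solution plus pathwise uniqueness then gives, via the Gy\"{o}ngy--Krylov characterization, a pathwise solution $U^R$ of the truncated equation on the original stochastic basis. Setting $\tau_R\defeq\inf\{t\ge 0\colon \|U^R(t)\|\ge R\}$ (with $\inf\varnothing=\infty$), $U^R$ solves the untruncated equation \eqref{eqn:temam} on $[0,\tau_R]$; pathwise uniqueness forces $U^{R'}=U^R$ and $\tau_{R'}\ge\tau_R$ for $R'>R$, so $\zeta\defeq\lim_{R\to\infty}\tau_R$ and $U\defeq U^R$ on $[0,\tau_R]$ define a local strong solution, maximal and pathwise unique up to $\zeta$. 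Finally, on $\{\zeta<\infty\}$ one has $\tau_R\uparrow\zeta$ with $\tau_R<\infty$ and $\|U(\tau_R)\|=R\to\infty$, which forces $\sup_{t\in[0,\zeta]}\|U\|^2+\int_0^\zeta|AU|^2\,\dd s=\infty$.

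The main obstacle is the stochastic compactness step: the truncation is what makes the a priori estimate of the second paragraph close, since without it the bilinear term cannot be controlled by the dissipation $|AU|^2$ uniformly in time, and passing to the limit in $\theta_R(\|U_n\|)B(U_n,U_n)$ requires combining the strong $L^2(0,T;V)$ convergence with the uniform $L^2(0,T;D(A))$ bound — because $B$ is only estimated from $V\times D(A)$ into $V'$ — so that the identification works after testing against smooth ($D(A)$-valued) functions. A secondary technical point is the rigorous justification of the It\^o formula for $\|U\|^2$ in the Gelfand triple $V\subset H\subset V'$, which rests on the $D(A)$-regularity supplied by the dissipative estimate.
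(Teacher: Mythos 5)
The paper does not prove this statement: Theorem~\ref{thm:temam} is imported verbatim as Theorem~2.1 of \cite{DebGHTem}, so there is no internal proof to compare yours against. Your sketch is, in outline, exactly the argument of that reference --- Galerkin approximation in an eigenbasis of $A$, a cutoff $\theta_R(\|\cdot\|)$ on the nonlinearity, uniform energy estimates in $V$ and $L^2_t D(A)$, stochastic compactness via Aubin--Lions, Prokhorov and Skorokhod, identification of a martingale solution, pathwise uniqueness, the Gy\"ongy--Krylov device to return to the original stochastic basis, and removal of the cutoff by the stopping times $\tau_R$ with $\zeta=\lim_R\tau_R$ --- so as a reconstruction it is faithful and structurally sound.

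One step is asserted rather than verified, and it is the one where the precise form of the hypothesis matters. You claim the truncation makes the $V$-energy estimate close; but with \eqref{eqn:bilinear_3} exactly as transcribed in this paper, testing the bilinear term against $U^\flat=AU_n$ gives $|\langle B(U_n,U_n),AU_n\rangle|\le c_0\|U_n\|\,|AU_n|^2$, so after the cutoff the nonlinear contribution is bounded by $2Rc_0|AU_n|^2$, which is absorbed by the dissipation $-2|AU_n|^2$ only when $R$ is small relative to $1/c_0$. Since the truncated and untruncated equations coincide only while $\|U\|\le R$, one needs $R>\|U_0\|$, and for large data the estimate as you set it up does not close. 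In \cite{DebGHTem} the corresponding structural inequality is arranged so that Young's inequality produces a subcritical term of the form $\epsilon|AU|^2+C_\epsilon\,\|U\|^{2}\!\left(\text{lower-order norms}\right)$ which the cutoff controls for \emph{every} $R$; you should either use that form of the hypothesis or explain how the estimate closes with the version stated here. The rest of your plan (tightness, identification of the limit of $\theta_R(\|U_n\|)B(U_n,U_n)$ using strong $L^2_tV$ convergence against the uniform $L^2_tD(A)$ bound, uniqueness, and the blow-up criterion from $\|U(\tau_R)\|=R\to\infty$) is the standard and correct route.
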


\medskip

The main result of this section is the following theorem.

\begin{theorem}\label{thm:strong}
For any $\rho_0 \in H^1(\R^2)$, the equation (\ref{eqn:main_eqn_type2}) has a unique local strong solution defined for $t\in[0,\tau(\omega))$ in the sense of Definition \ref{def:strong}, where $\tau(\omega)$ satisfies
	\[
		\sup_{t \in [0,\tau]} \|\rho\|_{H^1(\R^2)} + \int_0^\tau \| D^2 \rho \|_2 \dd s = +\infty
	\]
almost surely on the set $\{\tau < \infty \}$.
\end{theorem}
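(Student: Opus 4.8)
The plan is to recast \eqref{eqn:main_eqn_type2} in the abstract form \eqref{eqn:temam} and invoke Theorem \ref{thm:temam}. Take the Gelfand triple $V\subset H\subset V'$ with $H=L^2(\R^2)$, $V=H^1(\R^2)$, $D(A)=H^2(\R^2)$, and set
\[
A \defeq -\tfrac{a^2}{2}\Delta + \tfrac{a^2}{2}\id, \qquad B(U,U^\sharp) \defeq \chi\,\nabla\cdot\bigl(U\,\nabla(G*U^\sharp)\bigr), \qquad F(U) \defeq -\tfrac{a^2}{2}U,
\]
so that $AU+B(U,U)+F(U) = -\tfrac{a^2}{2}\Delta U + \chi\nabla\cdot(U\nabla(G*U))$ reproduces the drift in \eqref{eqn:main_eqn_type2}. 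The lower-order shift $\tfrac{a^2}{2}\id$ is forced, since $-\Delta$ is not an isomorphism $H^2(\R^2)\to L^2(\R^2)$ while the shifted operator is; it is moreover self-adjoint and positive, and after renorming $V$ by the equivalent inner product $((u,v))\defeq\tfrac{a^2}{2}\int_{\R^2}(\nabla u\cdot\nabla v+uv)\dd x$ it satisfies $(AU,U^\sharp)=((U,U^\sharp))$. The noise is realized by fixing an auxiliary Hilbert space $\mathcal{U}$ with orthonormal basis $\{f_k\}$ and setting $\sigma(U)f_k\defeq\alpha_k\,\Phi(U)\,e_k$, so that $\sigma(U)\dd W=\Phi(U)\dd W(t,x)$ in the notation of \eqref{eqn:noise}; using $\|e_k\|_\infty\le1$, $\sum_k\alpha_k^2<\infty$, the Lipschitz continuity of $\Phi$ and $\Phi(0)=0$ (so that $\Phi(U)\in L^2(\R^2)$) one checks $\|\sigma(U)\|_{L^2(\mathcal{U},H)}^2\le(\sum_k\alpha_k^2)\|\Phi(U)\|_2^2$ and $\|\sigma(U)-\sigma(V)\|_{L^2(\mathcal{U},H)}\le(\sum_k\alpha_k^2)^{1/2}\Lip(\Phi)\|U-V\|_2$, so $\sigma$ is Lipschitz $H\to L^2(\mathcal{U},H)$; $F$ is linear and bounded, hence Lipschitz.

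It then remains to verify the two structural bounds \eqref{eqn:bilinear_2}--\eqref{eqn:bilinear_3} for $B$, which simultaneously give continuity of $B$ from $V\times D(A)$ to $V'$ and from $D(A)\times D(A)$ to $H$. The essential inputs are the Bessel-potential estimate \eqref{eqn:bessel_est}, i.e. $\|\nabla(G*v)\|_\infty\le C_q\|v\|_q$ for $q>2$, and the two-dimensional Sobolev embeddings $H^1(\R^2)\hookrightarrow L^q(\R^2)$ for every $q<\infty$ and $H^2(\R^2)\hookrightarrow L^\infty(\R^2)$. Integrating by parts, $\langle B(U,U^\sharp),U^\flat\rangle=-\chi\int_{\R^2}U\,\nabla(G*U^\sharp)\cdot\nabla U^\flat\dd x$, whence by H\"older
\[
|\langle B(U,U^\sharp),U^\flat\rangle|\le\chi\,\|U\|_2\,\|\nabla(G*U^\sharp)\|_\infty\,\|\nabla U^\flat\|_2\le C\chi\,\|U\|_2\,\|U^\sharp\|_{H^1}\,\|U^\flat\|_{H^1},
\]
and since $\|U^\sharp\|_{H^1}\le C|AU^\sharp|$ this is \eqref{eqn:bilinear_2}. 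For \eqref{eqn:bilinear_3} one writes $B(U,U^\sharp)=\chi\bigl(\nabla U\cdot\nabla(G*U^\sharp)-U\,U^\sharp\bigr)$ using $\Delta(G*U^\sharp)=-U^\sharp$, and estimates in $L^2(\R^2)$: $\|\nabla U\cdot\nabla(G*U^\sharp)\|_2\le\|\nabla U\|_2\|\nabla(G*U^\sharp)\|_\infty\le C\|U\|_{H^1}\|U^\sharp\|_{H^1}$ and $\|U\,U^\sharp\|_2\le\|U\|_4\|U^\sharp\|_4\le C\|U\|_{H^1}\|U^\sharp\|_{H^1}$; the bound $\|\cdot\|_{H^1}\le C|A\cdot|$ then upgrades $\|U\|_{H^1}\|U^\sharp\|_{H^1}$ to $C\|U\|_{H^1}^{1/2}|AU|^{1/2}\|U^\sharp\|_{H^1}^{1/2}|AU^\sharp|^{1/2}$, which is \eqref{eqn:bilinear_3}.

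With all hypotheses of Theorem \ref{thm:temam} in place, it yields a unique local strong solution $(\rho,\tau)$ of \eqref{eqn:temam}, and unwinding the identifications of $A,B,F,\sigma$ shows this is precisely a local strong solution of \eqref{eqn:main_eqn_type2} in the sense of Definition \ref{def:strong}, with blow-up alternative $\sup_{t\in[0,\tau]}\|\rho\|_V^2+\int_0^\tau|A\rho|^2\dd s=\infty$ almost surely on $\{\tau<\infty\}$. The stated criterion follows by translating norms: $\|\rho\|_V\simeq\|\rho\|_{H^1(\R^2)}$ and, via the Calder\'on--Zygmund inequality, $|A\rho|_H\simeq\|\rho\|_{H^2(\R^2)}\simeq\|\rho\|_2+\|D^2\rho\|_2$, so on the finite interval $[0,\tau]$ finiteness of $\sup_{[0,\tau]}\|\rho\|_{H^1}$ together with $\int_0^\tau\|D^2\rho\|_2\dd s<\infty$ would force the abstract quantity to be finite as well. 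The only step that is not routine bookkeeping is the verification of \eqref{eqn:bilinear_2}--\eqref{eqn:bilinear_3}: the real content there is exploiting the one-derivative smoothing of the Bessel potential in \eqref{eqn:bessel_est} to produce the $\|\nabla(G*U^\sharp)\|_\infty$ factor, and handling the residual term $U\,U^\sharp$ via the borderline embedding $H^1(\R^2)\hookrightarrow L^4(\R^2)$. One structural hypothesis worth flagging explicitly is $\Phi(0)=0$, without which $\sigma$ fails to map into $L^2$ over the unbounded domain $\R^2$.
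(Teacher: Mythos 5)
Your proposal follows essentially the same route as the paper: the same Gelfand triple $H=L^2(\R^2)$, $V=H^1(\R^2)$, $D(A)=H^2(\R^2)$, the same bilinear form $B(u,u^\sharp)=\nabla\cdot\bigl(u\,\nabla(G*u^\sharp)\bigr)$, and the same verification of \eqref{eqn:bilinear_2}--\eqref{eqn:bilinear_3} via integration by parts, the Bessel-potential bound \eqref{eqn:bessel_est}, the decomposition $B(u,u^\sharp)=\nabla u\cdot\nabla(G*u^\sharp)-u\,u^\sharp$, and the two-dimensional $L^4$ interpolation, followed by an appeal to Theorem \ref{thm:temam}. You are in fact somewhat more explicit than the paper on the surrounding bookkeeping (the zeroth-order shift making $A$ bijective, the Hilbert--Schmidt and Lipschitz verification for $\sigma$, and the need for $\Phi(0)=0$ on the unbounded domain), but the mathematical content is the same.
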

\begin{proof}
 For $u \in V$ and $u^\sharp \in H$ define
 $B(u,u^\sharp) := \nabla \cdot \big( u \nabla (G*u^\sharp) \big) \colon V \times H \to H$ with $D(A)=H^2(\R^2)$, $V=H^1(\R^2)$, $H=L^2(\R^2)$ and the $|\cdot|_{H}$ norm will be denoted with $\|\cdot\|_{H}$.  In order to apply Theorem \ref{thm:temam}, we need to check that the conditions \eqref{eqn:bilinear_2} and \eqref{eqn:bilinear_3} on the bilinear form are satisfied.

Let us start with verifying the condition \eqref{eqn:bilinear_2}. For all $u,u^\flat \in V$ and $u^\sharp \in D(A)$,
	\[
		\langle B(u,u^\sharp),u^\flat \rangle = \int_{\R^2} \nabla \cdot \big( u \nabla (G*u^\sharp) \big) u^\flat \dd x = - \int_{\R^2} u \nabla (G*u^\sharp) \cdot \nabla u^\flat \dd x.
	\]
Hence, by Holder inequality, $\big| \langle B(u,u^\sharp),u^\flat \rangle  \big| \leq \| \nabla u^\flat \|_H \| u\nabla(G*u^\sharp) \|_H$. In view of the embedding $W^{2,2}(\R^2)\subset W^{1,\infty}(\R^2),$ we have
	\begin{equation} \label{eqn:sob_sob_emb}
		\| \nabla (G*u^\sharp) \|_\infty \leq C \| u^\sharp\|_H.
	\end{equation}
Therefore,
	\[
		\big| \langle B(u,u^\sharp),u^\flat \rangle  \big| \leq \| \nabla u^\flat \|_H \| u^\sharp \|_H \| u \|_H \leq \| u^\flat \|_V \| u^\sharp \|_{D(A)} \| u \|_V,
	\]
	and (\ref{eqn:bilinear_2}) follows. It remains to verify that the condition \eqref{eqn:bilinear_3} is satisfied. For any $u,u^\sharp \in D(A)$ and $u^\flat \in H$, we have $\big| \langle B(u,u^\sharp),u^\flat \rangle  \big|^2 \leq \| B(u,u^\sharp) \|^2_H \| u^\flat \|_H^2$. Since $-\Delta (G*u^\sharp) = u^\sharp$,
	\[
		B(u,u^\sharp) = \nabla \cdot \big( u \nabla (G*u^\sharp) \big) = \nabla u \cdot \nabla(G*u^\sharp) - u \, u^\sharp.
	\]
Hence,
	\begin{align*}
		\| B(u,u^\sharp) \|_H^2 &\leq 2 \| \nabla u \cdot \nabla(G*u^\sharp)\|_H^2 + 2\| u \, u^\sharp \|_H^2 \\
												 &\xupref{eqn:sob_sob_emb}{\leq} 2C\|\nabla u\|_H^2 \| u^\sharp \|_H^2 + 2 \int_{\R^2} u^2 (u^\sharp)^2\dd x \\
												 &\leq 2C\|\nabla u\|_H^2 \| u^\sharp \|_H^2 + 2 \left( \int_{\R^2} u^4 \dd x\right)^{1/2} \left( \int_{\R^2} (u^\sharp)^4 \dd x\right)^{1/2}.
	\end{align*}
Then, using the interpolation inequality $$\int_{\R^2} u^4 \dd x \leq 2 \int_{\R^2}  u^2 \dd x \int_{\R^2}  |\nabla u|^2 \dd x,$$ we get
	\begin{align*}
		\| B(u,u^\sharp) \|_H^2 &\leq 2C \| \nabla u \|_H^2 \| u^\sharp\|_H^2 + 4 \| u \|_H \| \nabla u \|_H \| u^\sharp \|_H \| \nabla u^\sharp\|_H \\
												&\leq C_1 \big( \| \nabla u \|_V^2 \| u^\sharp\|_V^2 +  \| u \|_V \| u \|_V \| u^\sharp \|_V \|  u^\sharp\|_V \\
												&= 2C_1 \| u \|_V^2 \| u^\sharp \|_V^2 \\
												&\leq 2C_1 \| u \|_V \| u \|_{D(A)} \| u^\sharp \|_V \| u^\sharp \|_{D(A)}, 
	\end{align*}
which implies the desired result
	\[
	\big| \langle B(u,u^\sharp),u^\flat \rangle  \big| \leq \| B(u,u^\sharp) \|_H \| u^\flat \|_H \leq C \| u \|_V^{1/2} \| u \|_{D(A)}^{1/2} \| u^\sharp \|_V^{1/2} \| u^\sharp \|_{D(A)}^{1/2} \| u^\flat \|_H.
	\]
The statement of Theorem \ref{thm:strong} now follows from Theorem \ref{thm:temam}.
\end{proof}


\bigskip

\section{Blowup of solutions.}\label{sec:blowup}

In this section we will show that the solutions of both (\ref{eqn:main_eqn_type1}) and (\ref{eqn:main_eqn_type2}) fail to exist globally if the initial data is large enough. In addition, in subection \ref{subsec:blowup for any mass} we show that the solution of (\ref{eqn:main_eqn_type2}) fails to exist globally for any initial data. It is worth mentioning, that in each of these cases, the reason for the lack of global existence will be different, which leads to the need of introducing various types of blowup throughout this section. Namely,
we define three types of blowup.

\begin{definition} \label{def:bu_types}
Let $\xi(\omega) \in [0,+\infty]$ such that $\Prb\big( \xi(\omega)<\infty \big)>0$.
	\begin{enumerate}[label={\bf (\roman{enumi})}] \itemsep=0.8em
		\item $\xi(\omega)$ is a finite time blowup for \eqref{eqn:main_eqn1} of {\bf Type 1} if  
	\begin{equation} \label{eqn:bu_type1}
		\sup_{t \in [0,\xi]} \|\rho\|_{H^1(\R^2)} + \int_0^\xi \| D^2 \rho \|_2 \dd s = +\infty
	\end{equation}
for all $\omega$ such that $\xi(\omega)<\infty$.
		\item $\xi(\omega)$ is a finite time blowup for \eqref{eqn:main_eqn1}  of {\bf Type 2} if 
	\begin{equation} \label{eqn:bu_type2}
		\int_{\R^2} |x|^2 \rho(x,\xi)\dd x = +\infty
	\end{equation}
for all $\omega$ such that $\xi(\omega)<\infty$.
		\item $\xi(\omega)$ is a finite time blowup for \eqref{eqn:main_eqn1}  of {\bf Type 3} if there exists $R>0$ such that
	\begin{equation} \label{eqn:bu_type3}
		\E \| \rho(x,\xi) \|_{L^\infty(B_R)} = +\infty.
	\end{equation}
	\end{enumerate}	
\end{definition}

\subsection{Blowup in the case of large mass for solutions of (\ref{eqn:main_eqn_type1}).}\label{subsec:blowup_1}
In this subsection we show that the solutions of (\ref{eqn:main_eqn_type1}) fail to exist globally for large enough initial conditions. The main result of this subsection is the following Theorem:

\begin{theorem}\label{thm:bu_special}
Assume the initial condition $\rho_0(x)$ is such that 
\[
\int_{\R^2} |x|^2 \rho_0(x) \dd x < \infty
\]
and $m_0 = \int_{\R^2} \rho_0(x) \dd x$
satisfies 
\begin{equation}\label{eqn:large_mass}
\frac{\chi}{2 \pi} m_0 > 2(\nu^2 + \sigma^2).
\end{equation}
Then the solution of (\ref{eqn:main_eqn_type1}) with the initial condition $\rho_0$ has a {\bf Type 3} blowup $\xi(\omega)$. Furthermore, $\xi(\omega)$   satisfies $\xi(\omega) \leq T$ a.s., where $T>0$ is such that
\begin{equation}\label{eqn:cond_on_T}
\left(-2(\nu^2 + \sigma^2) m_0 + \frac{\chi}{2 \pi} m_0^2\right) T - \int_{\R^2}  |x|^2 \rho_0(x) \dd x > 0.
\end{equation}
\end{theorem}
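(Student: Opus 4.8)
The plan is to run a stochastic version of the classical virial (second-moment) argument of Nagai and Blanchet--Dolbeault--Perthame. Write $M_2(t):=\int_{\R^2}|x|^2\rho(x,t)\,\dd x$. By hypothesis $M_2(0)<\infty$, and by the positivity and mass-conservation properties (Proposition \ref{prop:max_pr}(i),(ii)) we have, a.s.\ on the interval of existence, $\rho(t)\ge 0$, $\int_{\R^2}\rho(t)\,\dd x=m_0$, and hence $M_2(t)\ge 0$. Since $|x|^2\notin C_c^2(\R^2)$, I would first test the weak formulation of Definition \ref{def:weak} against the cut-offs $\psi_R(x):=R^2\psi(x/R)$, where $\psi\in C_c^\infty(\R^2)$ is radial with $\psi(y)=|y|^2$ on $\{|y|\le1\}$ and $\psi$ constant on $\{|y|\ge2\}$; thus $\psi_R\equiv|x|^2$ on $B_R$, $|\nabla\psi_R(x)|\le C|x|$, and — the crucial point — $\|D^2\psi_R\|_\infty=\|D^2\psi\|_\infty$ is independent of $R$.

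With the divergence noise $\sigma\nabla\rho\,\dd W$ integrated by parts, Definition \ref{def:weak} gives
\[
\dd\!\int_{\R^2}\!\psi_R\rho=\Big(\tfrac{\nu^2+\sigma^2}{2}\!\int_{\R^2}\!\Delta\psi_R\,\rho+\chi\!\int_{\R^2}\!\nabla\psi_R\cdot\rho\,\nabla(G*\rho)\Big)\dd t-\sigma\Big(\!\int_{\R^2}\!\nabla\psi_R\cdot\rho\,\dd x\Big)\cdot\dd W(t).
\]
Symmetrizing the double integral, $\int\nabla\psi_R\cdot\rho\,\nabla(G*\rho)=\tfrac12\iint\big(\nabla\psi_R(x)-\nabla\psi_R(y)\big)\cdot\nabla G(x-y)\,\rho(x)\rho(y)\,\dd x\,\dd y$, and since $|\nabla\psi_R(x)-\nabla\psi_R(y)|\le\|D^2\psi\|_\infty|x-y|$ while $|\nabla G(x-y)|\le\tfrac1{2\pi}|x-y|^{-1}$, this term is bounded by $\tfrac{\|D^2\psi\|_\infty}{4\pi}m_0^2$ uniformly in $R$ and $t$; similarly $|\int\Delta\psi_R\,\rho|\le Cm_0$ and $|\int\nabla\psi_R\cdot\rho\,\dd x|^2\le Cm_0\int\psi_R\rho$. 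Since $0\le\int\psi_R\rho\le 2R^2m_0$ is bounded, the stochastic integral is a genuine martingale, so taking expectations yields $\E\!\int\psi_R\rho(t)\le 2M_2(0)+Cm_0^2t$ uniformly in $R$; letting $R\to\infty$ (Fatou) gives $\E M_2(t)<\infty$. In particular $M_2(t)<\infty$ a.s., so the blow-up is \emph{not} of Type 2.

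Knowing $|x|^2\rho(t)\in L^1$, I would then pass $R\to\infty$ in the identity itself by dominated convergence: $\Delta\psi_R\to 4$, $\nabla\psi_R\to 2x$, the nonlocal term converges to $\chi\iint(x-y)\cdot\nabla G(x-y)\,\rho(x)\rho(y)=-\tfrac{\chi}{2\pi}m_0^2$ (exactly as in the deterministic Keller--Segel model), and the noise term converges to $-2\sigma\int_0^t b(s)\cdot\dd W(s)$ with $b(s)=\int x\rho(x,s)\,\dd x$, $|b(s)|^2\le m_0 M_2(s)$, hence a zero-mean martingale. Therefore
\[
\E M_2(t)=M_2(0)-\kappa t,\qquad \kappa:=\tfrac{\chi}{2\pi}m_0^2-2(\nu^2+\sigma^2)m_0,
\]
with $\kappa>0$ precisely by the largeness assumption \eqref{eqn:large_mass}. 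If the solution existed on all of $[0,T]$ for a $T$ satisfying \eqref{eqn:cond_on_T}, i.e.\ $\kappa T>\int_{\R^2}|x|^2\rho_0\,\dd x$, then $\E M_2(T)<0$, contradicting $M_2(T)\ge0$; thus the maximal existence time $\xi$ satisfies $\xi\le T$ a.s.\ (in particular $\Prb(\xi<\infty)>0$). To identify $\xi$ as a Type 3 blow-up I would argue by elimination: it is not Type 2 by the above, and if $\E\|\rho(t)\|_{L^\infty(B_R)}$ remained finite for every $R$ up to $t=\xi$, then together with $\|\rho(t)\|_1=m_0$ and $M_2(t)<\infty$ one could control $\|\rho(t)\|_p$ and continue the solution past $\xi$, contradicting maximality; hence $\E\|\rho(\xi)\|_{L^\infty(B_R)}=+\infty$ for some $R>0$.

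The main obstacle I anticipate is the last step: making the continuation criterion precise enough to pin the blow-up down as exactly Type 3, and — on the technical side — carrying the virial identity and the zero-mean property of the stochastic integral up to the \emph{random} time $\xi$ via a stopping-time localization, rather than only up to deterministic times.
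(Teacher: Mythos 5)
Your proposal is correct and follows essentially the same route as the paper: the same second-moment (virial) argument with compactly supported cutoffs of $|x|^2$, the same symmetrization of the aggregation term giving the bound by $m_0^2$, the same expectation/Fatou passage to show $\E\int_{\R^2}|x|^2\rho(x,t)\,\dd x<\infty$, and the same contradiction between $\E M_2(T)<0$ and positivity under \eqref{eqn:large_mass} and \eqref{eqn:cond_on_T}. The one place you diverge is the final identification of the blowup type: where you propose an elimination/continuation argument (which you rightly flag as the delicate step), the paper argues more directly that since the contradictory inequality $\E\int_{\R^2}\varphi_{\varepsilon_0}(x)\rho(x,t)\,\dd x<0$ is already reached for a \emph{compactly supported} cutoff $\varphi_{\varepsilon_0}$, the only escape is $\E\|\rho(\cdot,t)\|_{L^\infty(B_R)}=+\infty$ for $R=2/\varepsilon_0$, which is precisely the Type 3 condition and removes the need for any continuation criterion.
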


\medskip

Before we prove this theorem we introduce the following definition that will be used in the proof.
\begin{definition}\label{cutoff}
For every $\varepsilon>0$ the function $\varphi_\varepsilon \in C_0^2(\R^2)$ is {\it smooth cutoff function for $|x|^2$} if  $\varphi_\varepsilon(x)=|x|^2$ for $|x| \leq \varepsilon^{-1}$, $\varphi_\varepsilon(x) = 0$ for $|x| \geq 2 \varepsilon^{-1}$, with $|\Delta \varphi_\varepsilon| \leq 4$ and $\nabla \varphi_\varepsilon$ uniformly Lipschitz with Lipschitz constant $2$.
\end{definition}

\medskip

\begin{proof}[Proof of Theorem \ref{thm:bu_special}]
Let $\rho$ be a weak solution of \eqref{eqn:main_eqn_type1} whose local existence is guaranteed by Theorem \ref{thm:exist_loc_spec} for all initial conditions. Using the symmetrization of the double integral, the Definition \ref{def:weak} of the weak solution can be equivalently written in the form (see e.g. \cite{BlanchetDolbeaultPerthame}, Section 2.1)
	\begin{align} \label{def_french}
\nonumber		\int_{\R^2} & \varphi(x)  \rho(x,t) \dd x - \int_{\R^2} \varphi(x) \rho(x,0)\dd x \\
									  & = \int_0^t \left( \int_{\R^2} \frac{\nu^2 + \sigma^2}{2}\Delta \varphi(x)\rho(x,t) \dd x \right. \\
\nonumber									  &\qquad \qquad \left. - \frac{\chi}{4\pi} \int_{\R^2}\! \int_{\R^2} \frac{\big(\nabla \varphi(x)-\nabla\varphi(y)\big)\cdot(x-y)}{|x-y|^2}\rho(x,s)\rho(y,s) \dd x \dd y \right) \dd s \\
																	\nonumber											&\qquad\qquad\qquad\qquad  - \int_0^t \int_{\R^2} \rho(x,s) \nabla \varphi(x) \dd W(s)
	\end{align}
for all $\varphi \in C_0^2(\R^2)$. 

Then by Definition \ref{cutoff} we have
	\begin{gather*}
		\int_{\R^2} |\Delta \varphi_\varepsilon| \rho \dd x \leq 4 m_0, \text{ and} \\
		\int_{\R^2}\! \int_{\R^2} \frac{\big|\nabla \varphi_\varepsilon(x)-\nabla\varphi_\varepsilon(y)\big|\cdot|x-y|}{|x-y|^2}\rho(x,s)\rho(y,s) \dd x \dd y \leq 2 m_0^2.
	\end{gather*}
	
Therefore, noting that $$\E \int_0^t \int_{\R^2} \rho(x,s) \nabla \varphi_\varepsilon(x) \dd x \dd W(s)=0,$$ we get
	\[
		\E \int_{\R^2} \varphi_\varepsilon(x)\rho(x,t)\dd x - \int_{\R^2} \varphi_\varepsilon(x)\rho(x,0)\dd x \leq \int_0^t (4m_0 + 2m_0^2) \dd s = (4m_0 + 2m_0^2)\,t,
	\]
which, in turn implies that $\E \int_{\R^2} \varphi_\varepsilon(x)\rho(x,t) \dd x \leq (4m_0+2m_0^2)\,t +\int_{\R^2} |x|^2 \rho_0(x) \dd x$. Hence, we have
	\[
		\int_{\R^2} \varphi_\varepsilon(x) \rho(x,t)\dd x < \infty \text{ a.s. }
	\]  
Since $\rho(x,t) \geq 0$, we may now apply Fatou's lemma to get
	\[
		\E \int_{\R^2} |x|^2 \rho(x,t) \dd x \leq \liminf_{\varepsilon\to 0} \E \int_{\R^2} \varphi_\varepsilon(x)\rho(x,t)\dd x < \infty.
	\] 
Therefore,
\[
\int_{\R^2} |x|^2 \rho(x,t) \dd x < \infty \text{ a.s. }
\]
Assume $\rho(x,t)$ is defined for all $m_0>0$ and $t\geq 0$. Substituting $\varphi_\varepsilon(x)$ in the definition (\ref{def_french}), and passing to the limit as $\varepsilon \to 0$ in every integral using Lebesgue Dominated Convergence Theorem, we have
	\begin{multline}\label{eqn:eps}
	\lim_{\varepsilon \to 0} \left[\E \int_{\R^2} \varphi_\varepsilon(x) \rho(x,t) \dd x - \int_{\R^2} \varphi_\varepsilon(x) \rho(x,0) \dd x\right] = \\ 
	\E \int_{\R^2} |x|^2 \rho(x,t) \dd x - \int_{\R^2} |x|^2 \rho(x,0) \dd x = \left( 2(\nu^2 + \sigma^2) m_0 - \frac{\chi}{2\pi} m_0^2 \right) t < 0
	\end{multline}
for $m_0$ satisfying (\ref{eqn:large_mass}) and $t\geq T$ satisfying (\ref{eqn:cond_on_T}).  The inequality \eqref{eqn:eps} implies that for sufficiently small $\varepsilon_0>0$ we have
\[
\E \int_{\R^2} \varphi_{\varepsilon_0}(x) \rho(x,t) \dd x < 0.
\]
This contradicts the fact that $\rho(x,t) \geq 0$ a.e. Since $\varphi_{\varepsilon_0}(x)$ has compact support, the solution $\rho(x,t)$ must have a {\bf Type 3} blowup.
\end{proof}

\bigskip
\subsection{Blowup in the case of large mass for solutions of (\ref{eqn:main_eqn_type2}).}\label{subsec:blowup_2}

In this subsection we establish the lack of global existence for the solutions of the equation (\ref{eqn:main_eqn_type2}). Once again, following \cite{BlanchetDolbeaultPerthame}, Section 2.1, we can represent the weak solution of (\ref{eqn:main_eqn_type2}) in the form
	\begin{equation}	 \label{eqn:weak_soln_gen_noise}
	\begin{aligned} 
		\int_{\R^2} & \varphi(x)  \rho(x,t) \dd x - \int_{\R^2} \varphi(x) \rho(x,0)\dd x \\
									  & = \int_0^t \left( \int_{\R^2} \frac{a^2}{2} \Delta \varphi(x)\rho(x,s) \dd x \right. \\
									  &\qquad \left. - \frac{\chi}{4\pi} \int_{\R^2}\! \int_{\R^2} \frac{\big(\nabla \varphi(x)-\nabla\varphi(y)\big)\cdot(x-y)}{|x-y|^2}\rho(x,s)\rho(y,s) \dd x \dd y \right) \dd s \\
																												&\qquad\qquad  +  \int_{\R^2} \int_0^t \Phi\big(\rho(x,s)\big) \varphi(x) \dd W(x,s) \dd x,
	\end{aligned}
	\end{equation}
for all $\varphi \in C_0^2(\R^2)$. As we showed in subsection \ref{subsec:loc_exist_gen_noise}, the solution of \eqref{eqn:weak_soln_gen_noise} exists for $t\in[0,\tau(\omega))$ for some random variable $\tau(\omega) \in (0,+\infty]$.

\smallskip

\begin{remark}
The stopping time $\tau(\omega)$ from \cite{DebGHTem} satisfies \eqref{eqn:bu_type1}; however, we cannot call it a finite time blowup of {\bf Type 1} as it can possibly be infinite with probability 1.
\end{remark}

Before we proceed let us introduce the following notation:
	\begin{alignat*}{2}
		m_0 &\defeq \int_{\R^2} \rho_0(x) \dd x \qquad\qquad M_0 &&\defeq \int_{\R^2} |x|^2 \rho_0(x) \dd x, \\
		m(t) &\defeq \int_{\R^2} \rho(x,t) \dd x \quad\qquad  M(t) &&\defeq \int_{\R^2} |x|^2 \rho(x,t) \dd x.
	\end{alignat*}

\bigskip

\begin{theorem}\label{thm:bu_general}
Suppose there exists $t_1>0$ such that $\Prb\big( \tau(\omega) \geq t_1 \big)=1$. Then for $m_0$ sufficiently large (depending explicitly on $t_1$ and $M_0$) $t_1$ is a {\bf Type 3} blowup.
\end{theorem}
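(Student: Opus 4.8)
The plan is to mimic the virial-type argument from the proof of Theorem~\ref{thm:bu_special}, but now working with the general noise \eqref{eqn:main_eqn_type2} where the $L^1$ mass is only conserved in expectation. Starting from the weak formulation \eqref{eqn:weak_soln_gen_noise} with the cutoff test function $\varphi_{\varepsilon}$ from Definition~\ref{cutoff}, I would first take expectations. The stochastic term $\int_{\R^2}\int_0^t \Phi(\rho)\varphi_\varepsilon\dd W\dd x$ has zero mean (it is a genuine martingale on $[0,t_1]$ since, by the hypothesis $\Prb(\tau\geq t_1)=1$, the solution is defined on all of $[0,t_1]$), so it drops out. The diffusion term contributes at most $\frac{a^2}{2}\cdot 4\, m(t)$ after using $|\Delta\varphi_\varepsilon|\leq 4$, and the symmetrized nonlocal term is controlled by $\frac{\chi}{4\pi}\cdot 2\, m(t)^2$ exactly as before because $\nabla\varphi_\varepsilon$ has Lipschitz constant $2$. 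Passing $\varepsilon\to 0$ via monotone/dominated convergence (having first checked $M(t)<\infty$ a.s.\ on $[0,t_1]$, which follows from the same Fatou argument as in Theorem~\ref{thm:bu_special}) yields the differential inequality
\[
\frac{\dd}{\dd t}\,\E M(t) \;\leq\; 2a^2\,\E m(t) - \frac{\chi}{2\pi}\,\E\big(m(t)^2\big).
\]

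The main obstacle, and the reason this case is genuinely harder than Theorem~\ref{thm:bu_special}, is that $m(t)$ is no longer deterministic: we only know $\E m(t)=m_0$, and a priori $\E(m(t)^2)$ could be much larger than $(\E m(t))^2$, which would \emph{help} the blowup, or the mass could fluctuate in a way that is hard to track. The key realization to push through is that by Proposition~\ref{prop:max_pr}(i) we have $\rho(x,t)\geq 0$ a.s., hence $m(t)\geq 0$ a.s., and by Jensen's inequality $\E(m(t)^2)\geq (\E m(t))^2 = m_0^2$. Therefore the sign of the drift term works in our favor: we obtain
\[
\frac{\dd}{\dd t}\,\E M(t) \;\leq\; 2a^2 m_0 - \frac{\chi}{2\pi}\,m_0^2,
\]
and integrating,
\[
\E M(t) \;\leq\; M_0 + \Big(2a^2 m_0 - \frac{\chi}{2\pi}m_0^2\Big)\,t.
\]

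If $m_0$ is large enough that $\frac{\chi}{2\pi}m_0^2 - 2a^2 m_0 > M_0/t_1$, i.e.\ $m_0$ satisfies an explicit quadratic lower bound depending on $a^2$, $\chi$, $M_0$ and $t_1$, then the right-hand side becomes negative at $t=t_1$, forcing $\E M(t_1)<0$. Since $\rho\geq 0$ a.s.\ gives $M(t_1)\geq 0$ a.s., this is a contradiction with the assumption that the solution exists (with finite second moment) up to time $t_1$ on a set of probability one. The way to convert this into a genuine {\bf Type 3} blowup statement, rather than merely a contradiction, is exactly as at the end of the proof of Theorem~\ref{thm:bu_special}: before passing to the limit, the negativity of $\E\int_{\R^2}\varphi_{\varepsilon}(x)\rho(x,t_1)\dd x$ for some small fixed $\varepsilon_0>0$ contradicts $\rho\geq 0$ a.e., and since $\varphi_{\varepsilon_0}$ is compactly supported the only escape is that $\E\|\rho(\cdot,t_1)\|_{L^\infty(B_R)}=+\infty$ for $R=2\varepsilon_0^{-1}$, which is \eqref{eqn:bu_type3}. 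I would be somewhat careful about one subtlety: justifying that the zero-mean property of the stochastic integral and Fubini's theorem are legitimate requires knowing $\rho\in S^\infty_{\mathcal{F}_W}$ or at least enough integrability on $[0,t_1]$; this is where the hypothesis $\Prb(\tau\geq t_1)=1$ and the a priori bounds from Theorem~\ref{thm:strong} get used, and I expect the bookkeeping there — rather than the virial inequality itself — to be the fiddliest part of the write-up.
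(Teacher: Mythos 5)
Your central idea --- take expectations in the weak formulation with the cutoff $\varphi_\varepsilon$, kill the martingale term, and then use Jensen ($\E(m(t)^2)\geq(\E m(t))^2=m_0^2$) to make the quadratic drift term at least as negative as in the deterministic-mass case --- is exactly the engine of the paper's proof, and your final quadratic condition on $m_0$ matches the paper's inequality $M_0-\frac{\chi}{2\pi}t_1m_0^2+2a^2m_0t_1<0$. However, there is a gap precisely at the step you wave through as ``passing $\varepsilon\to 0$ via monotone/dominated convergence'': to identify the limit of the symmetrized double integral as $2\,\E\int_0^{t_1}m(s)^2\,\dd s$ by dominated convergence you need the dominating function $\rho(x,s)\rho(y,s)$ to satisfy $\E\int_0^{t_1}\big(\int_{\R^2}\rho\,\dd x\big)^2\dd s<\infty$, and for the general noise this is \emph{not} known a priori --- $m(t)$ is a genuine random variable and its second moment may fail to be integrable in time--expectation. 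Your differential inequality $\frac{\dd}{\dd t}\E M(t)\leq 2a^2m_0-\frac{\chi}{2\pi}\E(m(t)^2)$ is therefore not justified as written when $\E\int_0^{t_1}m^2=+\infty$.

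The paper resolves this with a case split that you should adopt. If $\E\int_0^{t_1}m^2<\infty$, your argument goes through verbatim (this is the paper's Case 2). If $\E\int_0^{t_1}m^2=+\infty$, the conclusion is in fact easier --- the quadratic term drives $\E\int\varphi_\varepsilon\rho(\cdot,t_1)\,\dd x$ to $-\infty$ --- but making this rigorous requires (a) Fatou's lemma applied to the truncated masses $\int\rho_\varepsilon$, for which the kernel identity is exact and nonnegative, and (b) control of the error between the truncated and full double integrals, which needs $\int_0^{t_1}m^2(s)\,\dd s<\infty$ \emph{almost surely}. That last fact is not free: the paper proves it as Lemma \ref{lem:finite_mass}, using the $H^1$ regularity of the strong solution from Theorem \ref{thm:strong} on $[0,t_1]$, the It\^o isometry, and the linear growth bound $|\Phi(\rho)|\leq L|\rho|$. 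So your proposal is missing one auxiliary lemma and the dichotomy that tells you which convergence theorem is available; the virial computation itself and the Jensen step are correct and are the same as the paper's.
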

The principal technical difference between Theorem \ref{thm:bu_special} and Theorem \ref{thm:bu_general} is that while the solutions of (\ref{eqn:main_eqn_type1}) satisfy $\int_{\R^2} \rho(x,t) \dd x = m_0$, the quantity $ \int_{\R^2} \rho(x,t) \dd x$ is in fact a random variable for the solutions of (\ref{eqn:main_eqn_type2}). This causes additional complications. In particular, for the proof of Theorem \ref{thm:bu_general} we need the following auxiliary result.

\begin{lemma} \label{lem:finite_mass}
Let $\rho(x,t)$ be a local solution of (\ref{eqn:main_eqn_type2}) with $|\Phi(\rho)|\leq L |\rho|$. For $t_1$ given by Theorem \ref{thm:bu_general}, let $B \defeq \left\{ \omega \in \Omega \colon \int_0^{t_1} \left( \int_{\R^2} \rho(x,s)\dd x \right)^2 \dd s  = +\infty \right\}$. Then $\Prb(B) = 0$.
\end{lemma}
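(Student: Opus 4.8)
The plan is to show that the scalar process $m(t)\defeq\int_{\R^2}\rho(x,t)\dd x$ satisfies $\E[m(t)^2]<\infty$ for $t\in[0,t_1]$; integrating this bound in time and using non-negativity of $\rho$ then immediately forces $\int_0^{t_1}m(s)^2\dd s<\infty$ almost surely, i.e. $\Prb(B)=0$.

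First I would derive the stochastic differential satisfied by $m(t)$. Starting from the mild representation (\ref{eqn:mild2}) — equivalently, testing the weak formulation (\ref{eqn:weak_soln_gen_noise}) against cutoffs $\varphi_\varepsilon$ increasing to $1$ with $\Delta\varphi_\varepsilon\to0$ and $\nabla\varphi_\varepsilon\to0$ in the relevant sense — and invoking the mass-preserving identities (\ref{prop1})--(\ref{prop2}) for the heat semigroup together with a stochastic Fubini theorem, the two deterministic drift terms vanish (the Laplacian term because $S(t)$ preserves integrals, the nonlocal term because it is a pure divergence), leaving
\[
m(t) = m_0 + \sum_{k=1}^\infty \alpha_k\int_0^t\Big(\int_{\R^2}\Phi(\rho(x,s))e_k(x)\dd x\Big)\dd W_k(s).
\]
By Proposition \ref{prop:max_pr}(iii) and $\rho\geq 0$ we already know $0\le m(t)<\infty$ a.s., and the stochastic integral admits a continuous modification, so $m$ is a continuous process on $[0,t_1]$ (recall the standing assumption $\Prb(\tau\ge t_1)=1$, so the solution exists on $[0,t_1]$ a.s.).

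Next I would run a localization/Gronwall argument. Since $m$ is not a priori in $L^2(\Omega)$, introduce the stopping times $\tau_R\defeq\inf\{t\ge0\colon m(t)>R\}\wedge t_1$. On $[0,\tau_R]$ the stochastic integral is a genuine $L^2$-martingale, so Itô's isometry gives
\[
\E\big[m(t\wedge\tau_R)^2\big] = m_0^2 + \E\int_0^{t\wedge\tau_R}\sum_{k=1}^\infty\alpha_k^2\Big(\int_{\R^2}\Phi(\rho(x,s))e_k(x)\dd x\Big)^2\dd s.
\]
Using $\|e_k\|_\infty\le1$, the hypothesis $|\Phi(\rho)|\le L|\rho|=L\rho$, and $\sum_k\alpha_k^2<\infty$, the integrand is bounded by
\[
\sum_{k=1}^\infty\alpha_k^2\Big(\int_{\R^2}\Phi(\rho)e_k\dd x\Big)^2 \le \Big(\sum_{k=1}^\infty\alpha_k^2\Big)L^2\Big(\int_{\R^2}\rho\dd x\Big)^2 = C_\alpha L^2\,m(s)^2.
\]
Hence $\E[m(t\wedge\tau_R)^2]\le m_0^2 + C_\alpha L^2\int_0^t\E[m(s\wedge\tau_R)^2]\dd s$, and Gronwall's inequality yields $\E[m(t\wedge\tau_R)^2]\le m_0^2 e^{C_\alpha L^2 t}$, a bound independent of $R$. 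Since $m$ is a.s. continuous on the compact interval $[0,t_1]$, we have $\tau_R=t_1$ for $R$ large (depending on $\omega$), so $m(t\wedge\tau_R)\to m(t)$ a.s. for $t\le t_1$; Fatou's lemma then gives $\E[m(t)^2]\le m_0^2 e^{C_\alpha L^2 t}$ on $[0,t_1]$. Integrating, $\E\int_0^{t_1}m(s)^2\dd s\le m_0^2 t_1 e^{C_\alpha L^2 t_1}<\infty$, and therefore $\int_0^{t_1}m(s)^2\dd s<\infty$ a.s., which is precisely $\Prb(B)=0$.

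The main obstacle is the rigorous justification of the first step: Definition \ref{def:weak} only admits test functions in $C_c^2(\R^2)$, so reaching the constant $\varphi\equiv1$ requires choosing the cutoffs $\varphi_\varepsilon$ so that the truncation errors $\int\Delta\varphi_\varepsilon\,\rho$, the truncated nonlocal double integral, and the truncated stochastic term all vanish in the limit using only the a priori information $\rho\in L^1(\R^2)$ and $\rho\geq 0$; and the stochastic Fubini exchange must be licensed, which again is handled by the same localization by $\tau_R$. Once the martingale identity for $m(t)$ is established, the remaining estimates are routine.
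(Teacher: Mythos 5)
Your proof is correct, but it closes the estimate by a different mechanism than the paper does. The paper's proof does not localize or use Gr\"onwall at all: it invokes the a priori bound $\E \sup_{t\in[0,t_1]}\|\rho(t)\|_{H^1(\R^2)}^2<\infty$ coming from the strong-solution framework (property \eqref{eqn:2.18} of Definition \ref{def:strong}), estimates $\bigl|\int_{\R^2}\Phi(\rho)e_k\dd x\bigr|\leq L\|\rho\|_2\|e_k\|_2=L\|\rho\|_2$ by Cauchy--Schwarz and orthonormality of the $e_k$, and then applies Doob's maximal inequality together with the It\^o isometry to conclude $\E\sup_{t\in[0,t_1]}m^2(t)<\infty$ in one step. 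You instead use only $\|e_k\|_\infty\leq 1$, $|\Phi(\rho)|\leq L\rho$ and $\rho\geq 0$ to bound the integrand by $C_\alpha L^2 m(s)^2$, and then you must localize (since square-integrability of the integrand is exactly what is at stake) and run It\^o isometry, Gr\"onwall and Fatou on the stopped process. What your route buys is self-containedness and robustness: it never touches the $H^1$ (or even $L^2$) regularity of $\rho$, it does not need the $e_k$ to be square-integrable or orthonormal, and it yields the explicit bound $\E[m(t)^2]\leq m_0^2e^{C_\alpha L^2 t}$; the cost is the extra localization layer and the fact that you only control $\E[m(t)^2]$ pointwise in $t$ rather than $\E\sup_t m^2(t)$ (which is all the lemma needs). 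One point to keep honest: the representation $m(t)=m_0+\sum_k\alpha_k\int_0^t\bigl(\int_{\R^2}\Phi(\rho)e_k\dd x\bigr)\dd W_k(s)$ must itself be derived with the stopping times $\tau_R$ already in place, since without them the quadratic variation of the candidate martingale is controlled only by $\int_0^{t_1}m^2(s)\dd s$ --- the very quantity you are trying to prove finite; you flag this correctly at the end, and stopping before passing $\varepsilon\to 0$ in the cutoff argument resolves it, but it deserves to be stated as part of the proof rather than as an afterthought. The paper sidesteps this particular circularity because its $L^2$ bound makes the unstopped stochastic integral a genuine square-integrable martingale from the outset.
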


\begin{proof}
It follows from (\ref{eqn:2.18}) that
\begin{equation}\label{eqn:finite}
\E \sup_{t \in [0,t_1]} \|\rho(t,\cdot)\|_{H^{1}(\R^2)}^2 < \infty.
\end{equation}
Then
\[
m^2(t) \leq 2 m_0^2 + 2\left(\sum_{k=1}^{\infty} a_k \int_0^t \int_{\R^2} \Phi(u)e_k(x) \dd x \dd W_k(s) \right)^2
\]
Using the property of stochastic integral 
\[
\E \sup_{t \in [0,T]}\left|\int_0^t f(s) \dd W(s)\right|^2 \leq 4 \int_0^T \E f^2(s) \dd s
\]
as well as \eqref{eqn:finite}, we get
\begin{align*}
\E \sup_{t \in [0,t_1]} m^2(t) &\leq 2 m_0^2 + 8 \sum_{k=1}^{\infty} a_k^2  \int_0^{t_1} \E\left(\int_{\R^2} \Phi(\rho)e_k(x) \dd x \right)^2 \dd s \\
 &\leq 2 m_0^2 + 8 \sum_{k=1}^{\infty} a_k^2  \int_0^{t_1} \left(\E \int_{\R^2} L^2 |\rho(s,x)|^2 \dd x \int_{\R^2} e_k^2(x) \dd x \right)  \dd s  \\
 &\leq 2 m_0^2 + 8 \sum_{k=1}^{\infty} a_k^2 L^2 t_1 \E \sup_{t \in [0,t_1]} \|\rho(t, \cdot)\|_{H^1(\R^2)}^2 < \infty.
\end{align*}
Therefore, 
\[
\E \int_{0}^{t_1} m^2(t) \dd t  = \int_{0}^{t_1} \E m^2(t) \dd t \leq t_1 \, \E \sup_{t \in [0,t_1]} m^2(t)  < \infty, 
\]
which implies that 
\[
\int_{0}^{t_1} m^2(t) \dd t < \infty \text{ a.s. }
\]
and the statement of the Lemma follows.
\end{proof}

\medskip

\begin{proof}[Proof of Theorem \ref{thm:bu_general}.]
Consider two cases:

\medskip

\noindent{\emph{Case 1:}} Suppose 
\begin{equation}\label{eqn:inf_exp}
\E \int_0^{t_1} \left( \int_{\R^2} \rho(x,s) \dd x \right)^2 \dd s = +\infty.
\end{equation}
By Lemma \ref{lem:finite_mass}, $\int_0^{t_1} \left( \int_{\R^2} \rho(x,s) \dd x \right)^2 < +\infty$ a.s. For $\varepsilon>0$, let $\varphi_\varepsilon$ be given by Definition \ref{cutoff}. Introduce the next function
	\[
		\rho_\varepsilon(x,s) = \begin{dcases*}
						 \rho(x,s) &  if $|x| \leq \frac{1}{\varepsilon}$, \\
						 0	&  if $|x|>\frac{1}{\varepsilon}$.
					\end{dcases*}
	\]
Then
	\[
		\int_0^{t_1} \int_{\R^2} \! \int_{\R^2} \frac{\nabla \varphi_\varepsilon(x) - \nabla\varphi_\varepsilon(y)}{|x-y|^2} \cdot (x-y) \rho_\varepsilon(x,s)\rho_\varepsilon(y,s) \dd x \dd y \dd s = 2 \int_0^{t_1} \left( \int_{\R^2} \rho_\varepsilon(x,s)\dd x \right)^2 \dd s.
	\]
On one hand, by Fatou's lemma and (\ref{eqn:inf_exp})
	\[
		\liminf_{\varepsilon\to 0} \E \int_0^{t_1} \left( \int_{\R^2} \rho_\varepsilon(x,s)\dd x \right)^2 \dd s \geq \E \int_0^{t_1} \left( \int_{\R^2} \rho(x,s)\dd x \right)^2 \dd s = +\infty.
	\]
On the other hand,
	\[
		\int_0^{t_1} \int_{\R^2} \! \int_{\R^2} \frac{\nabla \varphi_\varepsilon(x) - \nabla\varphi_\varepsilon(y)}{|x-y|^2} \cdot (x-y) \Big(\rho_\varepsilon(x,s)\rho_\varepsilon(y,s) - \rho(x,s)\rho(y,s)  \Big) \dd x \dd y \dd s \to 0 
	\]
a.s. as $\varepsilon \to 0$, since
	\begin{multline} \nonumber
	\int_0^{t_1} \int_{\R^2} \! \int_{\R^2} \frac{\nabla \varphi_\varepsilon(x) - \nabla\varphi_\varepsilon(y)}{|x-y|^2} \cdot (x-y) \Big(\rho_\varepsilon(x,s)\rho_\varepsilon(y,s) - \rho(x,s)\rho(y,s)  \Big) \dd x \dd y \dd s \\
		\leq 2 \int_0^{t_1} \int_{\R^2}\! \int_{\R^2} \Big( \rho(x,s)\rho(y,s)-\rho_\varepsilon(x,s)\rho_\varepsilon(y,s) \Big)\dd x \dd y \dd s
	\end{multline}
with $0 \leq \rho(x,s)\rho(y,s)-\rho_\varepsilon(x,s)\rho_\varepsilon(y,s) \leq \rho(x,s)\rho(y,s)$ and $\int_0^{t_1} \int_{\R^2} \! \int_{\R^2} \rho(x,s)\rho(y,s) \dd x \dd y < +\infty$ a.s. Since 
$$|\Delta \varphi_\varepsilon(x) \rho(x,s)| \leq 4 \rho(x,s)$$ 
and 
$$4\E \int_{\R^2}\rho(x,s)\dd x = 4 m_0<\infty, $$ by Lebesgue Dominated Convergence Theorem,
	\[
		\E \int_0^{t_1} \int_{\R^2} \Delta \varphi_\varepsilon(x) \rho(x,s) \dd x \dd s \to 4 t_1 m_0, \ \ \ \varepsilon\to 0.
	\]

Hence, taking the expected value of both sides in \eqref{eqn:weak_soln_gen_noise}, we obtain that there exists $\varepsilon_0>0$ such that for all $\varepsilon<\varepsilon_0$
	\[
		\E \int_{\R^2} \varphi_\varepsilon(x) \rho(x,t_1 ) \dd x < 0,
	\]
and since $\supp \varphi_\varepsilon \subset B_{1/\varepsilon}$, we conclude that $\E \| \rho(x,t_1) \|_{L^\infty(B_{2/\varepsilon_0})} = +\infty$.

\medskip

\noindent\emph{Case 2:} Suppose 
\begin{equation}\label{eqn:finite2}
\E \int_0^{t_1} \left( \int_{\R^2} \rho(x,s) \dd x \right)^2 \dd s < +\infty.
\end{equation} 
Since
	\[
		\frac{\big| \nabla \varphi_\varepsilon(x) - \nabla \varphi_\varepsilon(y) \big|}{|x-y|} \rho(x,s) \rho(y,s) \leq 2 \rho(x,s)  \rho(y,s),
	\]
using (\ref{eqn:finite2}) and Lebesgue Dominated Convergence Theorem, we may pass to the limit as $\varepsilon\to 0$ to obtain
	\begin{align*}
		\E \int_0^{t_1} \int_{\R^2}\!\int_{\R^2} &\frac{\nabla \varphi_\varepsilon(x) - \nabla \varphi_\varepsilon(y)}{|x-y|^2}\cdot (x-y) \rho(x,s) \rho(y,s) \dd x \dd y \dd s \\
															&\to 2  \E \int_0^{t_1} \left( \int_{\R^2} \rho(x,s) \dd x \right)^2 \dd s \geq 2 \int_0^{t_1} \left( \E \int_{\R^2} \rho(x,s)\dd x \right)^2 \dd s \\
															&=2 \int_0^{t_1} m_0^2 \dd s = 2 m_0^2 t_1.
	\end{align*}
Therefore, for $\varepsilon \leq \varepsilon_0$, we have 
	\[
		\E \int_{\R^2} \varphi_\varepsilon(x) \rho(x,t) \dd x \leq M_0 - \frac{\chi}{2\pi}t_1 m_0^2 + 2 a^2 m_0 t_1 <0,
	\]
and, once again, $\E \| \rho(x,t_1) \|_{L^\infty(B_{2/\varepsilon_0})} = +\infty$.
\end{proof}

\medskip

\subsection{Blowup for any mass for solutions of (\ref{eqn:main_eqn_type2}).}\label{subsec:blowup for any mass}
In this section we show that in the case of linear multiplicative noise, the equation (\ref{eqn:main_eqn_type2}) has a blowup with nonzero probability for any nontrivial initial conditions. In other words, we show that any nontrivial local solution cannot be extended globally.

Recall that by Proposition \ref{prop:max_pr} we have $\E m(t) = m_0$ for $t\geq 0$ within the interval of existence. We consider (\ref{eqn:main_eqn_type2}) with $\Phi(\rho) := \sigma\rho$ for some $\sigma \in \R$, and $W(x,s):=W(s)$.

\begin{theorem}\label{thm:bu_type1_2}
Let $\rho(x,t)$ be a solution of (\ref{eqn:main_eqn_type2}) with an initial condition $\rho_0\in H^1(\R^2)$, $\rho_0 \not \equiv 0$ and $\int_{\R^2}|x|^2\rho_0(x)\dd x<+\infty$. Then there exists $t_2=t_2(m_0)<\infty$ and a random variable $\xi(\omega)$ such that $\xi(\omega)$ is a finite time blowup for $\rho(x,t)$ of either {\bf Type 1} or {\bf Type 2}, and $p_2 \defeq \Prb\big( \xi(\omega) < t_2 \big) >0,$ with $p_2$ determined explicitly in terms of $m_0$.
\end{theorem}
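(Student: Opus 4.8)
The plan is to adapt the classical virial-type argument (as in Theorem~\ref{thm:bu_special}) to the linear multiplicative noise $\Phi(\rho)=\sigma\rho$, $W(x,s)=W(s)$, but now using the It\^o formula for the second moment $M(t)=\int_{\R^2}|x|^2\rho(x,t)\dd x$ together with the fact that the linear noise can be removed by the substitution $\rho(x,t)=e^{\sigma W(t)-\frac{\sigma^2}{2}t}\,v(x,t)$ (or, equivalently, by tracking the exponential martingale $Z(t)=e^{\sigma W(t)}$). First I would assume, for contradiction, that the local strong solution from Theorem~\ref{thm:strong} extends past some deterministic time $t_2$ on an event of probability $1-p_2$ without a Type~1 blowup; on that event the solution lies in $H^1$ and satisfies the weak formulation \eqref{eqn:weak_soln_gen_noise}. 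Plugging the cutoff $\varphi_\varepsilon$ for $|x|^2$ (Definition~\ref{cutoff}) and passing to the limit $\varepsilon\to0$ exactly as in the proofs of Theorems~\ref{thm:bu_special} and \ref{thm:bu_general}, I would derive the stochastic differential identity
\begin{equation*}
\dd M(t) = \Bigl(2a^2 m(t) - \frac{\chi}{2\pi}m(t)^2\Bigr)\dd t + \sigma M(t)\,\dd W(t),
\end{equation*}
valid up to the existence time, where I also use that $\int_{\R^2}|x|^2\Phi(\rho)\varphi_\varepsilon\dd x \to \sigma M(t)$ since $\Phi(\rho)=\sigma\rho$. Likewise $\dd m(t)=\sigma m(t)\dd W(t)$, so $m(t)=m_0 e^{\sigma W(t)-\frac{\sigma^2}{2}t}$ explicitly.

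The key point is that, by Jensen (or by the explicit formula for $m$), $m(t)^2 \geq \bigl(\E m(t)\bigr)^2 = m_0^2$ only in expectation; pointwise $m(t)$ fluctuates. So the right strategy is to work with the transformed variable. Set $Z(t)=e^{-\sigma W(t)+\frac{\sigma^2}{2}t}$, so $m(t)=m_0/Z(t)$; applying It\^o to $N(t)\defeq Z(t)M(t)$ one finds the noise drops out and
\begin{equation*}
\dd N(t) = Z(t)\Bigl(2a^2 m(t) - \frac{\chi}{2\pi}m(t)^2\Bigr)\dd t = \Bigl(2a^2 m_0 - \frac{\chi}{2\pi}\frac{m_0^2}{Z(t)}\Bigr)\dd t,
\end{equation*}
using $Z(t)m(t)=m_0$ and $Z(t)m(t)^2=m_0^2/Z(t)$. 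Integrating,
\begin{equation*}
Z(t)M(t) = M_0 + 2a^2 m_0 t - \frac{\chi m_0^2}{2\pi}\int_0^t \frac{\dd s}{Z(s)}.
\end{equation*}
Since $M(t)\geq0$ and $Z(t)>0$, this forces $M_0 + 2a^2 m_0 t \geq \frac{\chi m_0^2}{2\pi}\int_0^t Z(s)^{-1}\dd s$ for all $t$ in the existence interval. Now I would choose the event $A$ on which $Z(s)^{-1}=e^{\sigma W(s)-\frac{\sigma^2}{2}s}$ stays above a fixed level $\delta>0$ for $s\in[0,t_2]$ — this has probability $\Prb(A)=p_2>0$ computable from the distribution of $\inf_{[0,t_2]}W$ (a reflection-principle Gaussian estimate) — so that on $A$ the right side is at least $\frac{\chi m_0^2}{2\pi}\delta t$, which exceeds $M_0+2a^2 m_0 t$ once $t\geq t_2$, provided $t_2$ is chosen so that $\frac{\chi m_0^2}{2\pi}\delta t_2 > M_0 + 2a^2 m_0 t_2$; this is possible for $m_0$ large, or — crucially for the "any mass" claim — for any $m_0>0$ by taking $t_2$ large and $\delta$ close to the almost-sure behavior, since the coefficient of $t$ on the aggregation side is $\frac{\chi m_0^2}{2\pi}\delta$ while on the diffusion side it is $2a^2 m_0$, and one only needs $\frac{\chi m_0}{2\pi}\delta>2a^2$, attainable by shrinking $\delta$'s required lower bound... wait — that needs care, since demanding $Z^{-1}\geq\delta$ on a long interval $[0,t_2]$ forces $\delta$ small and $p_2\to0$; instead I would fix $t_2$ moderately and accept small $p_2>0$, using only that for \emph{any} $m_0>0$ there is \emph{some} $t_2$ and some positive-probability event forcing the contradiction.

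Assembling it: on $A\cap\{\xi>t_2\}$ we would get $M(t_2)<0$, impossible since $\rho\geq0$ a.s.\ (Proposition~\ref{prop:max_pr}(i)); hence on $A$ the solution cannot be a nonnegative $H^1$-valued process with finite second moment up to $t_2$. This means for $\omega\in A$ either the strong-solution stopping time $\tau(\omega)$ (which satisfies \eqref{eqn:bu_type1}) is $\leq t_2$ — a Type~1 blowup — or $\tau(\omega)>t_2$ but $M(t_2)=+\infty$, which is a Type~2 blowup with $\xi(\omega)=t_2$; either way $\xi(\omega)<\infty$ on $A$, so $p_2=\Prb(\xi<t_2)\geq\Prb(A)>0$. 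The main obstacle is the bookkeeping in the last paragraph: one must carefully justify that the formal It\^o computation for $M$ and $N$ is legitimate up to the stopping time (via the cutoff $\varphi_\varepsilon$, the Dominated Convergence / Fatou passages as in Theorem~\ref{thm:bu_general}, and a localization argument to control the stochastic integrals), and one must produce the quantitative lower bound on $\Prb(A)$ — i.e.\ $\Prb\bigl(\inf_{s\in[0,t_2]}(\sigma W(s)-\tfrac{\sigma^2}{2}s)\geq \log\delta\bigr)>0$ — with $\delta$ and $t_2$ chosen consistently so that the aggregation term beats diffusion plus $M_0$. The explicit dependence of $t_2$ and $p_2$ on $m_0$ then comes out of this Gaussian computation combined with the inequality $\frac{\chi m_0^2}{2\pi}\delta\, t_2 > M_0 + 2a^2 m_0 t_2$.
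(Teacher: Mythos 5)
Your overall architecture matches the paper's: derive $m(t)=m_0\exp(\sigma W(t)-\tfrac{\sigma^2}{2}t)$, obtain the linear SDE $\dd M=(2a^2m-\tfrac{\chi}{2\pi}m^2)\dd t+\sigma M\dd W$ by passing to the limit in the cutoff test functions, solve it by the exponential martingale (your $Z(t)M(t)=M_0+2a^2m_0t-\tfrac{\chi m_0^2}{2\pi}\int_0^t Z(s)^{-1}\dd s$ is identical to the paper's variation-of-constants formula with $\Psi=Z^{-1}$), contradict positivity on a positive-probability event, and then split the blowup into Type 1 ($\tau\le t_2$) versus Type 2. The paper additionally justifies $M(t)<\infty$ a.s.\ via the Ikeda--Watanabe stochastic comparison theorem against an explicit majorant $u^+$, rather than your sketched localization, but that is a repairable technical difference.

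The genuine gap is in your choice of the event $A$. You condition on $Z(s)^{-1}=e^{\sigma W(s)-\sigma^2 s/2}\ge\delta$ throughout $[0,t_2]$, which yields only the \emph{linear} lower bound $\int_0^{t_2}Z(s)^{-1}\dd s\ge\delta t_2$. Since the process $Z^{-1}$ starts at $1$, the event is empty unless $\delta\le 1$, and then the required inequality $\tfrac{\chi m_0^2}{2\pi}\delta t_2>M_0+2a^2m_0t_2$ forces the slope condition $\tfrac{\chi m_0}{2\pi}\delta>2a^2$, i.e.\ $m_0>4\pi a^2/(\chi\delta)\ge 4\pi a^2/\chi$: both sides are affine in $t_2$ with the aggregation side passing through the origin, so no choice of $t_2$ helps unless its slope strictly dominates. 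This is exactly the large-mass threshold of Theorem~\ref{thm:bu_special}, so your argument as written cannot reach arbitrary $m_0>0$; the caveat you insert (``for any $m_0>0$ there is some $t_2$ and some positive-probability event'') is precisely the assertion that still needs proof. The paper's resolution is to condition instead on $W(s)\ge(\tfrac{\sigma}{2}+\alpha)s-\beta$ on $[0,t_2]$ (positive probability for each finite $t_2$), which gives $Z(s)^{-1}\ge e^{\sigma\alpha s-\sigma\beta}$ and hence an \emph{exponential} lower bound $\int_0^{t_2}Z(s)^{-1}\dd s\ge\tfrac{1}{\sigma\alpha}e^{-\sigma\beta}(e^{\sigma\alpha t_2}-1)$; the exponential beats the linear term $2a^2m_0t_2$ for $t_2$ large no matter how small $m_0$ is, which is the whole point of the ``blowup for any mass'' statement. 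Replacing your constant-level event with this linear-drift event (and keeping the rest of your argument, including the $\max\{z,p-z\}\ge p/2$ bookkeeping at the end) closes the gap.
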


\begin{proof}
It follows from Theorem \ref{thm:strong} that the local solution of (\ref{eqn:main_eqn_type2}) is a strong solution. Hence we can integrate (\ref{eqn:main_eqn_type2}) both in $x$ and in $t$ and thus obtain the following ordinary linear stochastic equation for the function $m(t)$:
	\begin{equation} \label{eqn:m}
		m(t) - m_0 = \sigma \int_0^t m(s) \dd W(s).
	\end{equation}
Let $t$ be a Markov's moment in \eqref{eqn:m} with respect to the filtration $\mathcal{F}_s^W$. Note that \eqref{eqn:m} is a linear SDE of the form $\dd m(t) = \sigma m(t)\dd W(t)$, which, using Ito's formula, has the explicit solution
	\begin{equation} \label{eqn:m_soln}
		m(t) = m_0 \exp \left( \frac{\sigma^2}{2} t + \sigma W(t) \right).
	\end{equation}
Next, we show that $M(t) = \int_{\R^2} |x|^2 \rho(x,t) \dd x < \infty$ a.s. for $t \leq \tau(\omega)$, where $\tau(\omega)$ is defined as in Theorem \ref{thm:strong}. Using $\varphi_\varepsilon$ given by Definition \ref{cutoff}, we have
	\begin{align*}
		-\int_{\R^2} \! \int_{\R^2}   \frac{\big(\nabla \varphi_\varepsilon(x)-\nabla\varphi_\varepsilon(y)\big)\cdot(x-y)}{|x-y|^2}& \rho(x,s)\rho(y,s) \dd x \dd y \\
										   &\leq \int_{\R^2} \! \int_{\R^2}  \frac{\big|\nabla \varphi_\varepsilon(x)-\nabla\varphi_\varepsilon(y)\big|}{|x-y|}\rho(x,s)\rho(y,s) \dd x \dd y \\
										   &\leq 2 \left( \int_{\R^2} \rho(x,s)\dd x\right)^2 = 2 m^2(s),
	\end{align*}
and $\int_{\R^2} \Delta \varphi_\varepsilon(x)\rho(x,s)\dd x \leq 4 m(s).$ Now define
	\begin{align*}
		u_\varepsilon(s) &\defeq \int_{\R^2} \rho(x,s)\varphi_\varepsilon(s) \dd x \\
		\psi_\varepsilon(s) &\defeq -\frac{\chi}{4\pi} \int_{\R^2} \!\int_{\R^2} \frac{\big(\nabla \varphi_\varepsilon(x)-\nabla\varphi_\varepsilon(y)\big)\cdot(x-y)}{|x-y|^2} \rho(x,s)\rho(y,s) \dd x \dd y \\
									&\qquad\qquad\qquad\qquad\qquad\qquad\qquad\qquad\qquad\qquad + \frac{a^2}{2} \int_{\R^2} \Delta \varphi_\varepsilon(x)\rho(x,s) \dd x.
	\end{align*}
Then $u_\varepsilon$ satisfies the SDE
	\begin{equation} \label{eqn:u_eps}
				 \left\{ \begin{array}{rl}
													\dd u_\varepsilon(t) &= \psi_\varepsilon(t)\dd t + \sigma u_\varepsilon(t)\dd W(t)\\  
													u_\varepsilon(0)		 &= \int_{\R^2} \rho_0(x)\varphi_\varepsilon(x)\dd x. 
				\end{array}\right.
	\end{equation}
Let $u^+$ solve
	\begin{equation} \label{eqn:u_plus}
				 \left\{ \begin{array}{rl}
													\dd u^+(t) &= \left(2 a^2 m(t) + \frac{\chi}{2\pi}m^2(t) \right)\dd t + \sigma u^+(t) \dd W(t)\\  
													u^+(0)		 &= M_0. 
				\end{array}\right.
	\end{equation}
Since $u^+(0) \leq u_\varepsilon(0)$ and $\psi_\varepsilon(t) \leq 2 a^2 m(t) + \frac{\chi }{2\pi}m^2(t)$, then using the stochastic comparison theorem by Watanabe and Ikeda\cite{WatIke}, we have
	\begin{equation} \label{eqn:stoch_comp}
		0 \leq u_\varepsilon(t) \leq u^+(t) \qquad \text{ for all } t \geq 0.
	\end{equation}
Note that
	\[
		u^+(t) = \Psi(t) \left( M_0 + \int_0^t \left( 2a^2m(s)+\frac{\chi}{2\pi}m^2(s) \right)\Psi^{-1}(s)\dd s \right),
	\]
where $\Psi(t) = \exp \left( -\frac{\sigma^2}{2}t + \sigma W(t) \right)$. Then, using \eqref{eqn:m_soln}, we have
	\[
		\int_0^t \left(2 a^2m(s)+\frac{\chi}{2\pi}m^2(s) \right)\Psi^{-1}(s)\dd s = 2a^2 m_0 t + \frac{\chi}{2\pi}m_0^2 \int_0^t  \exp \left( -\frac{\sigma^2}{2}s + \sigma W(s) \right) \dd s.
	\]
Hence,
	\begin{multline}\label{eqn:u+}
		u^+(t)  = \exp \left( -\frac{\sigma^2}{2}t + \sigma W(t) \right)\\ \times \left( M_0 + 2a^2 m_0 t + \frac{\chi m_0^2}{2\pi} \int_0^t \exp \left( -\frac{\sigma^2}{2}s + \sigma W(s) \right) \dd s \right) < \infty
	\end{multline}
for all $t \geq 0$. Passing to the limit $\varepsilon \to 0$ in \eqref{eqn:stoch_comp}, we have
	\[
		M(t) = \lim_{\varepsilon\to 0} u_\varepsilon(t) \leq u^+(t) < \infty.
	\]
Let $u(s) = \int_{\R^2} |x|^2 \rho(x,s)\dd x$. Since $(u_\varepsilon(s) - u(s))^2 \leq 2 (u^+(s))^2$ and $$\int_0^t (u^+(s))^2 \dd s<\infty,$$ by Lebesgue Dominated Convergence Theorem, we have $$\int_0^t (u_\varepsilon(s) - u(s))^2 \dd s \to 0$$ in probability. This, in turn, implies that
	\[
		\int_0^t \int_{\R^2} \rho(x,s)\varphi_\varepsilon(x) \dd x \dd W(s) \to \int_0^t \int_{\R^2} |x|^2 \rho(x,s) \dd x \dd W(s) \quad \text{a.s.}.
	\]
Then we can pass to the limit $\varepsilon\to 0$ in \eqref{eqn:weak_soln_gen_noise}, and get
	\begin{equation} \label{eqn:M}
		M(t) - M_0 = \int_0^t \left( 2a^2 m(s) - \frac{\chi}{2\pi} m^2(s) \right)\dd s + \sigma \int_0^t M(s) \dd W(s).
	\end{equation}
Therefore we get that
	\[
	M(t) = \exp \left( -\frac{\sigma^2}{2}t + \sigma W(t) \right) \left( M_0 + 2a^2 m_0 t - \frac{\chi m_0^2}{2\pi} \int_0^t \exp \left( -\frac{\sigma^2}{2}s + \sigma W(s) \right) \dd s \right).
	\]
Now, fix $\alpha, \beta>0$. For $t>0$, we define the random set
	\[
		A_{\alpha,\beta}^t \defeq \left\{ \omega \in \Omega \colon W(s) \geq \left( \frac{\sigma}{2} + \alpha \right) s - \beta, \ s\in[0,t] \right\}.
	\]
Then for every $\omega \in A_{\alpha,\beta}^t$ and for every $s\in[0,t]$ we have $-\frac{\sigma^2}{2}s + \sigma W(s) \geq \sigma \alpha s - \sigma \beta$. This implies that $$\int_0^t \exp \left( -\frac{\sigma^2}{2}s + \sigma W(s) \right)\dd s \geq \frac{1}{\sigma \alpha} e^{-\sigma \beta} \left( e^{\sigma\alpha t}-1 \right),$$ and we get
	\[
		M(t) \leq \Psi(t) \left( M_0 + 2a^2 m_0 t + \frac{\chi m_0^2}{\sigma\alpha e^{\sigma\beta} 2\pi} - \frac{\chi m_0^2}{\sigma \alpha 2 \pi}e^{\sigma(\alpha t - \beta)}\right).
	\]
Let $t_2$ be such that
	\begin{equation}\label{eqn:t_2}
		M_0 + 2a^2 m_0 t_2 + \frac{\chi m_0^2}{\sigma\alpha e^{\sigma\beta} 2\pi} \leq  \frac{\chi m_0^2}{\sigma \alpha 2 \pi}e^{\sigma(\alpha t_2 - \beta)}.
	\end{equation}
Then $M(t_2) \leq 0$, which contradicts the positivity of $\rho(x,t)$. Thus, 
$$p_{\alpha,\beta}^{t_2} \defeq \Prb(A_{\alpha,\beta}^{t_2}) >0$$ is the probability of {\bf Type 2} blowup under the condition that $\tau(\omega)>t_2$. 

We are now ready to estimate the probability of {\bf Type 1} or {\bf Type 2} blowup for $\rho(x,t)$. Let
	\[
		B_{t_2} \defeq \{ \omega \in \Omega \colon \tau(\omega) \leq t_2 \} \qquad \text{ with } \quad  \Prb(B_{t_2}) = z \in [0,1].
	\]
Clearly $\omega \in B_{t_2}$ implies {\bf Type 1} blowup. Let $I$ and $II$ denote the events of {\bf Type 1} and {\bf Type 2} blowup, respectively. Then 
	\begin{equation} \label{eqn:prob_type_1_2}
		\Prb(I \cup II) \geq \Prb(B_{t_2}) = z.
	\end{equation}
On the other hand,
	\begin{equation} \label{eqn:prob_type_1_2_2}
		\begin{aligned}
			\Prb( I \cup II) &\geq \Prb(A_{\alpha,\beta}^{t_2} \cap \ol{B_{t_2}}) = \Prb(A_{\alpha,\beta}^{t_2}) + \Prb(\ol{B_{t_2}})-P(A_{\alpha,\beta}^{t_2} \cup \ol{B_{t_2}}) \\
								&\geq \Prb(A_{\alpha,\beta}^{t_2}) + \Prb(\ol{B_{t_2}}) - 1 = p_{\alpha,\beta}^{t_2} + (1-z) -1 \\
								&= p_{\alpha,\beta}^{t_2} - z.
		\end{aligned}
	\end{equation}
Combining \eqref{eqn:prob_type_1_2} and \eqref{eqn:prob_type_1_2_2}, we have $\Prb(I \cup II) \geq \max \{z , p_{\alpha,\beta}^{t_2} - z \}$. Since the right-hand side is minimal if $z= \frac{1}{2}p_{\alpha,\beta}^{t_2}$, we have that $\Prb(I \cup II) \geq \frac{1}{2}p_{\alpha,\beta}^{t_2} > 0$.
\end{proof}

\bigskip
\section{Small perturbation}\label{sec:small_perturb}

In this section we consider the limit behavior of solutions of (\ref{eqn:main_eqn1}) as the noise contribution vanishes. To this end, for small $\varepsilon>0$ we consider
	\begin{equation} \label{eqn:eps_SPDE}
				 \left\{ \begin{array}{rl}
													\dd \rho_\varepsilon &= \Big( \frac{a^2}{2} \Delta \rho_\varepsilon - \chi \nabla \cdot \big( \rho_\varepsilon \nabla (G*\rho_\varepsilon) \big) \Big) \dd t + \varepsilon\,\nabla \rho_\varepsilon \dd W(t) \\  
													\rho_\varepsilon(0)		 &= \rho_0, 
				\end{array}\right.
	\end{equation}
and
	\begin{equation} \label{eqn:limit_PDE}
				 \left\{ \begin{array}{rl}
													\dd \rho_* &= \Big( \frac{a^2}{2} \Delta \rho_*) - \chi \nabla \cdot \big( \rho_* \nabla (G*\rho_*) \big) \Big) \dd t \\  
													\rho_* (0)		 &= \rho_0. 
				\end{array}\right.
	\end{equation}
	Assume
	\begin{equation*}\label{eqn:smallness_cond}
	 C \left( \frac{-\nu^2 p (p-1)}{2} + \chi \right) m_0^{-p/(p-1)} + \chi m_0^{(p-2)/(p-1)} < 0 \qquad \text{ for } p \geq 2,
	\end{equation*}
	with $C$ defined in Theorem \ref{th:global}. Hence, if $\rho_\varepsilon$ and $\rho_{*}$ are two global solutions of (\ref{eqn:eps_SPDE}) and (\ref{eqn:limit_PDE}) respectively, for $\varepsilon$ small enough, these solutions are defined globally as a consequence of Theorem \ref{th:global}.
The main result of this Section is the following Theorem.
\begin{theorem}
Let $\rho_0 \in H^s(\R^2) \cap L^1(\R^2) \cap L^p(\R^2)$ with $s>3$ and $p \geq 2$.  Then for any $T>0$ we have

	\begin{equation}\label{eqn:u_eps_conv}
		\E \sup_{t\in[0,T]} \| \rho_\varepsilon(t) - \rho_*(t) \|_p^p \to 0 \quad \text{ as } \varepsilon\to 0.
	\end{equation}
	
	\end{theorem}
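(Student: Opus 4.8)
The plan is to estimate $\bar\rho\defeq\rho_\varepsilon-\rho_*$ directly, in the spirit of the Corollary following Theorem \ref{th:global}, the point being that $\bar\rho(0)=0$ so that the only ``source'' terms carry an explicit power of $\varepsilon$. For $\varepsilon\le\varepsilon_0$ small, both $\rho_\varepsilon$ and $\rho_*$ are global by Theorem \ref{th:global}, with $\|\rho_\varepsilon(t)\|_p,\|\rho_*(t)\|_p\le\|\rho_0\|_p$ and $\|\rho_*(t)\|_1=m_0$ for all $t\ge0$. Subtracting \eqref{eqn:limit_PDE} from \eqref{eqn:eps_SPDE},
$$\dd\bar\rho=\Bigl(\frac{a^2}{2}\Delta\bar\rho-\chi\nabla\cdot\bigl(\rho_\varepsilon\nabla(G*\rho_\varepsilon)-\rho_*\nabla(G*\rho_*)\bigr)\Bigr)\dd t+\varepsilon\,\nabla\rho_\varepsilon\,\dd W(t),$$
and applying the Krylov It\^o formula of Proposition \ref{prop:Ito} to $\|\bar\rho\|_p^p$ yields
\begin{multline*}
\|\bar\rho(t)\|_p^p=-\frac{2a^2(p-1)}{p}\int_0^t\bigl\|\nabla\bar\rho^{p/2}\bigr\|_2^2\,\dd s-\chi p\int_0^t\!\!\int_{\R^2}\bar\rho^{p-1}\nabla\cdot\bigl(\rho_\varepsilon\nabla(G*\rho_\varepsilon)-\rho_*\nabla(G*\rho_*)\bigr)\dd x\,\dd s\\+\frac{\varepsilon^2p(p-1)}{2}\int_0^t\!\!\int_{\R^2}\bar\rho^{p-2}|\nabla\rho_\varepsilon|^2\,\dd x\,\dd s+\varepsilon p\int_0^t\!\!\int_{\R^2}\bar\rho^{p-1}\nabla\rho_\varepsilon\,\dd x\,\dd W(s).
\end{multline*}

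For the nonlocal term I would integrate by parts and add and subtract $\rho_*\nabla(G*\rho_\varepsilon)$, so that $\rho_\varepsilon\nabla(G*\rho_\varepsilon)-\rho_*\nabla(G*\rho_*)=\bar\rho\,\nabla(G*\rho_\varepsilon)+\rho_*\nabla(G*\bar\rho)$, and then run exactly the $I_1,I_2$ chain of estimates from the proof of Theorem \ref{thm:exist_loc_spec} — now with both $\xi$ and $\eta$ there replaced by the solutions themselves, so the stray $\|\xi-\eta\|_p^p$ term becomes part of $\|\bar\rho\|_p^p$. Using H\"older, Young, \eqref{eqn:bessel_est}, the Gagliardo--Nirenberg/interpolation inequalities used there, and $\|\rho_\varepsilon\|_p,\|\rho_*\|_p\le\|\rho_0\|_p$, this contribution is bounded by $c\int_0^t\|\nabla\bar\rho^{p/2}\|_2^2\,\dd s+C(\|\rho_0\|_p,m_0,p)\int_0^t\|\bar\rho\|_p^p\,\dd s$ with $c$ as small as we wish; choosing the Young parameters so that $c<\tfrac{2a^2(p-1)}{p}$ lets it be absorbed into the dissipation term, leaving only $C\int_0^t\|\bar\rho\|_p^p\,\dd s$. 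The $\varepsilon^2$ It\^o correction is handled by $\bar\rho^{p-2}|\nabla\rho_\varepsilon|^2\le\tfrac{p-2}{p}\bar\rho^p+\tfrac2p|\nabla\rho_\varepsilon|^p$, contributing $\le C\varepsilon^2\int_0^t(\|\bar\rho\|_p^p+\|\nabla\rho_\varepsilon\|_p^p)\,\dd s$.

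Taking $\sup_{t\in[0,T]}$ and expectations, the Burkholder--Davis--Gundy inequality applied to the martingale $M_t=\varepsilon p\int_0^t\int\bar\rho^{p-1}\nabla\rho_\varepsilon\,\dd x\,\dd W(s)$, together with $|\int\bar\rho^{p-1}\nabla\rho_\varepsilon\,\dd x|\le\|\bar\rho\|_p^{p-1}\|\nabla\rho_\varepsilon\|_p$, H\"older in $\omega$, and Young's inequality, gives
$$\E\sup_{t\in[0,T]}|M_t|\le\delta\,\E\sup_{t\in[0,T]}\|\bar\rho\|_p^p+C_\delta\,\varepsilon^p\,\E\Bigl(\int_0^T\|\nabla\rho_\varepsilon\|_p^2\,\dd s\Bigr)^{p/2};$$
since $\rho_\varepsilon,\rho_*\in S_{\mathcal{F_W}}^{\infty}([0,T];L^1\cap L^p)$, $\E\sup_{[0,T]}\|\bar\rho\|_p^p$ is finite, so for $\delta$ small this term and the dissipation term are absorbed, leaving
$$\E\sup_{t\in[0,T]}\|\bar\rho\|_p^p\le C\varepsilon^2\,\E\int_0^T\bigl(1+\|\nabla\rho_\varepsilon\|_p^p\bigr)\dd s+C_\delta\varepsilon^p\,\E\Bigl(\int_0^T\|\nabla\rho_\varepsilon\|_p^2\,\dd s\Bigr)^{p/2}+C\int_0^T\E\sup_{s\in[0,t]}\|\bar\rho\|_p^p\,\dd t.$$
If one has the uniform bound $\sup_{0<\varepsilon\le\varepsilon_0}\E\sup_{t\in[0,T]}\|\rho_\varepsilon(t)\|_{H^s(\R^2)}^q<\infty$ for the finitely many moments $q$ appearing here — and since $s>3$ implies $\|\nabla\rho_\varepsilon\|_p\le C\|\rho_\varepsilon\|_{H^s(\R^2)}$ in $\R^2$, this makes the first two terms $O(\varepsilon^2)$ — then Gr\"onwall gives $\E\sup_{t\in[0,T]}\|\bar\rho\|_p^p\le C_T\varepsilon^2\to0$, which is \eqref{eqn:u_eps_conv}.

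The main obstacle is precisely this uniform-in-$\varepsilon$ higher-regularity estimate, which is why $\rho_0\in H^s(\R^2)$ with $s>3$ is assumed. I would obtain it by first propagating $H^s$-regularity of $\rho_\varepsilon$ (re-running the fixed-point scheme of Theorem \ref{thm:exist_loc_spec} at the $H^s$-level, using the linear parabolic SPDE theory of \cite{KryMaxPr}) and then applying the It\^o formula to $\|\rho_\varepsilon\|_{H^s(\R^2)}^2$. The delicate point is the top order: the It\^o correction from the transport noise $\varepsilon\nabla\rho_\varepsilon\,\dd W$ contributes $\varepsilon^2\|\nabla\rho_\varepsilon\|_{H^s(\R^2)}^2$, which combines exactly with the $-a^2\|\nabla\rho_\varepsilon\|_{H^s(\R^2)}^2$ coming from $\tfrac{a^2}{2}\Delta\rho_\varepsilon$ to produce $-\nu^2\|\nabla\rho_\varepsilon\|_{H^s(\R^2)}^2\le0$ since $\nu^2=a^2-\varepsilon^2>0$ (the same cancellation underlying Proposition \ref{prop:Ito}); the nonlocal term $\chi\nabla\cdot(\rho_\varepsilon\nabla(G*\rho_\varepsilon))$ is controlled by commutator and product estimates in the Banach algebra $H^s(\R^2)$ ($s>3>1$) and absorbed/Gr\"onwalled, and BDG controls the resulting martingale, with higher moments $q$ following by the standard Burkholder iteration. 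All constants near $\varepsilon=0$ are monotone in $\varepsilon$, so the resulting bound on $[0,T]$ is genuinely uniform over $\varepsilon\in(0,\varepsilon_0]$.
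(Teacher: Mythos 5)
Your overall structure (It\^o formula for $\|\bar\rho\|_p^p$, splitting the nonlocal difference as $\bar\rho\,\nabla(G*\rho_\varepsilon)+\rho_*\nabla(G*\bar\rho)$, absorbing gradients into the dissipation, Gr\"onwall) matches the paper's. The genuine gap is the load-bearing ingredient you defer to the end: the uniform-in-$\varepsilon$ bound $\sup_{0<\varepsilon\le\varepsilon_0}\E\sup_{t\in[0,T]}\|\rho_\varepsilon\|_{H^s}^q<\infty$ for the \emph{stochastic} solutions. Nothing in the paper up to this point gives any $H^s$ control of $\rho_\varepsilon$ beyond local-in-time $H^1$ (Theorem \ref{thm:exist_loc_spec} and Theorem \ref{th:global} only propagate $L^1\cap L^p$ globally), so your sketch would require a separate higher-regularity local existence theory in $H^s$ for the SPDE, global continuation of those $H^s$ bounds on $[0,T]$, and uniform higher moments via BDG iteration --- a substantial unproven theorem, not a routine step. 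As written, the proof is conditional on a result that is at least as hard as the theorem itself.

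The paper closes this gap by never needing any regularity of $\rho_\varepsilon$ at all; the $s>3$ hypothesis is spent entirely on the \emph{deterministic} solution. Citing \cite{LiRodZha} gives $\rho_*\in C([0,\infty);H^s)$, hence $\|\nabla\rho_*\|_\infty\le A$ by the embedding $W^{3+\alpha,2}(\R^2)\subset C^{2,\alpha}(\R^2)$. Then two elementary observations eliminate every occurrence of $\nabla\rho_\varepsilon$: (a) in the It\^o correction, $|\nabla\rho_\varepsilon|^2\le 2|\nabla\bar\rho|^2+2|\nabla\rho_*|^2$, with the $\nabla\bar\rho$ piece absorbed into the dissipation for $\varepsilon$ small and the $\nabla\rho_*$ piece bounded by $A^2$ times an interpolation of $\|\bar\rho\|_{p-2}^{p-2}$; (b) in the stochastic integral, $\int_{\R^2}\bar\rho^{p-1}\nabla\bar\rho\,\dd x=\tfrac1p\int_{\R^2}\nabla(\bar\rho^{p})\,\dd x=0$, so $\int_{\R^2}\bar\rho^{p-1}\nabla\rho_\varepsilon\,\dd x=\int_{\R^2}\bar\rho^{p-1}\nabla\rho_*\,\dd x$, and the martingale is controlled by $\|\nabla\rho_*\|_\infty$ and the a priori $L^p$ bounds alone, yielding $\E\sup_t|I_4|\le C\varepsilon$. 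If you incorporate these two reductions, your argument goes through without the uniform $H^s$ machinery; without them, the proposal is incomplete.
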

\begin{proof}
Using \cite[Theorem 1.7]{LiRodZha}, we obtain $\rho_* \in C([0,\infty); H^s(\R^2) \cap L^1(\R^2)).$ In particular,
	\begin{equation}\label{eqn:sup_bd_u0}
		\sup_{t\in[0,T]} \big\{ \| \rho_*(t) \|_2, \, \| \nabla \rho_*(t)\|_2 \big\} \leq A
	\end{equation}
for some $A>0$. Let $y_\varepsilon \defeq \rho_\varepsilon - \rho_*$. Then,
	\begin{equation} \label{eqn:y_eps_SPDE}
				 \left\{ \begin{array}{rl}
													\dd y_\varepsilon &= \Big( \frac{a^2}{2} \Delta y_\varepsilon - \chi \nabla \cdot \big( \rho_\varepsilon \nabla (G*\rho_\varepsilon) - \rho_* \nabla(G*\rho_*)\big) \Big) \dd t + \varepsilon\,\nabla \rho_\varepsilon \dd W(t) \\  
													y_\varepsilon(0)		 &= 0. 
				\end{array}\right.
	\end{equation}
Note that, as we showed in subsection \ref{subsec:global_exist}, $\esssup_{\omega \in \Omega} \sup_{t\in [0,T]} \|\rho_\varepsilon\|_p \leq \| \rho_0 \|_p$. Now, Ito's formula for $\| y_\varepsilon(t) \|_p^p$ yields
	\begin{equation}	 \label{eqn:Ito_y_eps}
	\begin{aligned} 
		\| y_\varepsilon \|_p^p &= -p(p-1)\frac{a^2}{2} \int_0^t \int_{\R^2} y_\varepsilon^{p-2} |\nabla y_\varepsilon|^2 \dd x \dd s \\
										 &\qquad\qquad - \chi p \int_0^t \int_{\R^2} y_\varepsilon^{p-1} \nabla \cdot \big( \rho_\varepsilon\nabla (G*\rho_\varepsilon) - \rho_* \nabla(G*\rho_*) \big)\dd x \dd s \\
										 &\qquad\qquad\qquad\qquad +\frac{\varepsilon^2}{2} \int_0^t \int_{\R^2} p(p-1) y_\varepsilon^{p-2} |\nabla \rho_\varepsilon|^2 \dd x \dd s \\
										 &\qquad\qquad\qquad\qquad\qquad\qquad + \varepsilon \int_0^t \int_{\R^2} p y_\varepsilon^{p-1} \nabla \rho_\varepsilon \dd x \dd W(s) \\
										 &= I_1 + I_2 + I_3 + I_4.
	\end{aligned}
	\end{equation}
Next we will estimate each $I_1,\ldots,I_4$ in \eqref{eqn:Ito_y_eps}.
Note that
	\begin{equation} \label{eqn:I_1}
		I_1 = -a^2 p(p-1)\frac{2}{p^2} \int_0^t \int_{\R^2} \left|\nabla \left(y_\varepsilon^{p/2}\right)\right|^2 \dd x \dd t.
	\end{equation}
	
For the second term, noting that the nonlocal term can be written as $\nabla \cdot \Big( y_\varepsilon \nabla(G*\rho_\varepsilon)+\rho_* \nabla (G*y_\varepsilon) \Big)$, and using the bound $\esssup_{\omega \in \Omega} \sup_{t\in [0,T]} \|\rho_\varepsilon\|_p \leq \|\rho_0\|_p$, we estimate
	\begin{equation} \label{eqn:I_2_part1}
		\begin{aligned}
			\bigg| \chi \int_{\R^2} p y_{\varepsilon}^{p-1} \nabla \cdot & \left( y_\varepsilon \nabla (G * \rho_\varepsilon) \right) \dd x \bigg| \\
																			   & = \chi p (p-1) \left| \int_{\R^2} y_\varepsilon^{p-1} \nabla(G*\rho_\varepsilon) |\nabla y_\varepsilon|^2  \dd x \right| \\
																			   &\leq \chi p (p-1) \left( \int_{\R^2} |y_\varepsilon^{p/2 - 1} \nabla y_\varepsilon|^2 \dd x \right)^{1/2} \left(  y_\varepsilon^p |\nabla (G*\rho_\varepsilon)|^2 \dd x  \right)^{1/2} \\
																			   &\leq \chi \frac{2(p-1)}{p} \left( \int_{\R^2} |\nabla y_\varepsilon^{p/2} |^2 \dd x \right)^{1/2} \left(  y_\varepsilon^p |\nabla (G*\rho_\varepsilon)|^2 \dd x  \right)^{1/2} \\
																			   &\leq \alpha 2\chi(p-1) \| \nabla y_\varepsilon^{p/2} \|_2^2 + \frac{1}{\alpha} 2 \chi (p-1) C_p^2 \| \rho_\varepsilon\|_p^2 \int_{\R^2} y_\varepsilon^p \dd x \\
																			   &\leq \alpha 2 \chi (p-1)  \| \nabla y_\varepsilon^{p/2} \|_2^2 + \frac{1}{\alpha} 2 \chi (p-1) C_p^2 \| \rho_0\|_p^2 \int_{\R^2} y_\varepsilon^p \dd x.
		\end{aligned}
	\end{equation}
Likewise, we have
	\begin{equation} \label{eqn:I_2_part2}
		\begin{aligned}
			\bigg| \chi \int_{\R^2} p y_{\varepsilon}^{p-1} \nabla \cdot & \left( \rho_* \nabla (G * y_\varepsilon) \right) \dd x \bigg| \\
																			   & = \chi p (p-1) \left| \int_{\R^2} y_\varepsilon^{p-2}\rho_* \nabla(G*y_\varepsilon) \cdot \nabla y_\varepsilon  \dd x \right| \\
																			   &\leq \chi p (p-1) \left( \int_{\R^2} |\nabla y_\varepsilon^{p/2} |^2\dd x \right)^{1/2} \left(  y_\varepsilon^{p-2}\rho_*^2 |\nabla (G*y_\varepsilon)|^2 \dd x  \right)^{1/2} \\
																			   &\leq \frac{\alpha 2\chi(p-1)}{p} \int_{\R^2} \| \nabla y_\varepsilon^{p/2} \|^2 \dd x + \frac{1}{\alpha} 2 \chi p(p-1) C_p \|y_\varepsilon\|_p^2 \int_{\R^2} y_\varepsilon^{p-2} \rho_*^2 \dd x  \\
																			   &\leq \frac{\alpha 2 \chi (p-1)}{p} \int_{\R^2} \| \nabla y_\varepsilon^{p/2} \|^2 \dd x + \frac{C_p \chi p (p-1)}{\alpha} \left( \int_{\R^2} y_\varepsilon^p \dd x \right)^{\frac{p-2}{p}} \times \\
																			   & \qquad\qquad\qquad\qquad\qquad\qquad\qquad\qquad \times \left( \int_{\R^2} \rho_*^p \dd x \right)^{\frac{2}{p}} \left( \int_{\R^2} y_\varepsilon^p \dd x \right)^{\frac{2}{p}} \\
																			   &\leq \frac{\alpha 2 \chi (p-1)}{p} \int_{\R^2} \| \nabla y_\varepsilon^{p/2} \|^2 \dd x + \frac{C_p \chi p (p-1)}{\alpha} \|\rho_0\|_p^2 \left( \int_{\R^2} y_\varepsilon^p \dd x \right)^{\frac{2}{p}}.
		\end{aligned}
	\end{equation}		
Then putting together \eqref{eqn:I_2_part1} and \eqref{eqn:I_2_part2}, for the second term in \eqref{eqn:Ito_y_eps} we obtain
	\begin{equation} \label{eqn:I_2}
		|I_2| \leq \int_0^t \alpha \left(2\chi (p-1) + \frac{2\chi(p-1)}{p}\right) \| \nabla y_\varepsilon^{p/2} \|_2^2 \dd s + C \int_0^t \int_{\R^2} y_\varepsilon^p \dd x \dd s.
	\end{equation}
	
For the third term in \eqref{eqn:Ito_y_eps}, note that
	\begin{equation} \label{eqn:third_term}
	\begin{aligned}
		\frac{\varepsilon^2}{2} p (p-1) \int_{\R^2} y_\varepsilon^{p-2} & |\nabla \rho_\varepsilon|^2 \dd x \\
																								 & \leq \varepsilon^2 p (p-1) \int_{\R^2} y_\varepsilon^{p-2} |\nabla y_\varepsilon|^2 \dd x + \varepsilon^2 p (p-1) \int_{\R^2} y_\varepsilon^{p-2} |\nabla \rho_*|^2 \dd x.
	\end{aligned}
	\end{equation}
The first term on the right-hand side will be compensated by $I_2$.  To estimate $\int_{\R^2} y_\varepsilon^{p-2}|\nabla \rho_*|^2 \dd x$, first note that $\| y_\varepsilon\|_p \leq \|\rho_\varepsilon\|_p + \|\rho_* \|_p \leq 2\| \rho_0\|_p$. Then, by the embedding $W^{3+\alpha,2}(\R^2) \subset C^{2,\alpha}(\R^2)$ with $\alpha \in (0,1]$, we have 
	\begin{equation} \label{eqn:grad_u0}
		\| \nabla \rho_* \|_\infty \leq C \sup_{t\in [0,T]} \| \rho_*(t) \|_{W^{3+\alpha,2}(\R^2)} \leq A,
	\end{equation}
where the constant $A>0$ is given in \eqref{eqn:sup_bd_u0}. Furthermore, using the interpolation inequality,
	\begin{equation} \label{eqn:y_eps_interpol}
		\| y_\varepsilon \|_{p-2}^{p-2} \leq \| y_\varepsilon \|_{p}^{\frac{2p}{p-1}} \| y_\varepsilon \|_1^{\frac{2}{p-1}} \leq C \esssup_{\omega \in \Omega} \sup_{t\in[0,T]} \| y_\varepsilon \|_p^{\frac{2p}{p-1}} \leq C \| \rho_* \|_p^{\frac{2p}{p-1}}.
	\end{equation}
Combining these, we get
	\begin{equation} \label{eqn:I_3}
		|I_3| \leq \varepsilon^2 p(p-1) \frac{4}{p} \int_0^t \int_{\R^2} |\nabla y_\varepsilon^{p/2}|^2 \dd x \dd s + C\,T\,\varepsilon^2.
	\end{equation}
	
Now we estimate the stochastic term $I_4$. Note that $\int_{\R^2} y_\epsilon^{p-1} \nabla \rho_\varepsilon \dd x = \int_{\R^2} y_\varepsilon^{p-1} \nabla \rho_* \dd x$ since $\int_{\R^2} y_\varepsilon^{p-1} \nabla y_\varepsilon \dd x = 0$. Then
	\begin{align*}
		\E \sup_{t\in[0,T]} |I_4| &\leq \varepsilon p \,\E \sup_{t\in[0,T]} \left| \int_0^t \int_{\R^2} y_\varepsilon^{p-1} \nabla \rho_* \dd x \dd W(s) \right| \\
										 &\leq \varepsilon p \left( \E \left(\sup_{t\in [0,T]} \left| \int_0^t \int_{\R^2} y_\varepsilon^{p-1} \nabla \rho_* \dd x \dd W(s)\right|\right)^2 \right)^{1/2} \\
										 &\leq \varepsilon p \left( \E \int_0^t \E |\int_{\R^2} y_\varepsilon^{p-1} \nabla \rho_* \dd x|^2 \dd s \right)^{1/2}.
	\end{align*}	 
Proceeding similar as in the estimate for $I_3$ we obtain that
	\begin{equation} \label{eqn:I_4}
		\E \sup_{t\in [0,T]} |I_4(t)| \leq C \varepsilon.
	\end{equation}
Going back to \eqref{eqn:Ito_y_eps} and putting together \eqref{eqn:I_1}, \eqref{eqn:I_2}, and \eqref{eqn:I_3}, we get
	\begin{equation} \label{eqn:Ito_final_estimate}
		\begin{aligned}
			\| y_\varepsilon(t) \|_p^p &\leq \left( -2a^2\frac{p-1}{p} + \alpha 2 (p-1) + \frac{2\chi (p-1)}{p} \right) \int_0^t \| \nabla y_\varepsilon^{p/2}(s) \|_2^2 \dd s \\
												& \qquad\qquad + C_1 \int_0^t \| y_\varepsilon(s) \|_p^p \dd s + \frac{4\varepsilon^2 (p-1)}{p} \int_0^t \| \nabla y_\varepsilon^{p/2}(s) \|_2^2 \dd s \\
												& \qquad\qquad\qquad\qquad +C_2 \, T \varepsilon^2 + \sup_{t\in[0,T]} \varepsilon p \left| \int_0^t \int_{\R^2} y_\varepsilon^{p-1} \nabla \rho_\varepsilon \dd x \dd W(s) \right| \\
												& \leq C_3 \int_0^t \| y_\varepsilon(s) \|^p \dd s + C_2 \, T \varepsilon^2 + \varepsilon p \sup_{t\in[0,T]}  \left| \int_0^t \int_{\R^2} y_\varepsilon^{p-1} \nabla \rho_\varepsilon \dd x \dd W(s) \right|.
		\end{aligned}
	\end{equation}
For $\alpha$ and $\varepsilon$ sufficiently small, Gr\"onwall's inequality implies that
	\[
		\| y_\varepsilon(t) \|_p^p \leq \left( C_2 \varepsilon^2 T  + \varepsilon p \sup_{t\in[0,T]} \left| \int_0^t \int_{\R^2} y_\varepsilon^{p-1}\nabla \rho_\varepsilon \dd x \dd W(s) \right|\right)\,e^{C\,T}.
	\]
Finally, taking expectations and using the estimate \eqref{eqn:I_4}, we obtain $$\E \sup_{t\in [0,T]} \| y_\varepsilon(t) \|_p^p \to 0, \ \ \varepsilon \to 0,$$ which yields the desired result.
\end{proof}

\bigskip
\subsection*{Acknowledgments}
The research of Oleksandr Misiats was supported by Simons Collaboration Grant for Mathematicians No. 854856. The research of Ihsan Topaloglu was supported by Simons Collaboration Grant for Mathematicians No. 851065. The research of Oleksandr Stanzhytskyi was supported by Ukrainian Government Scientific Research Grant No. 210BF38-01.

\bibliographystyle{IEEEtranS}
\def\url#1{}
\bibliography{references}

\end{document}